\documentclass[11pt, leqno]{amsart}
\setlength{\textwidth}{14.9cm}
\setlength{\textheight}{21.3cm}
\hoffset=-35pt
\usepackage{amsfonts,amssymb,amscd,amsmath,amsthm}
\usepackage{tikz}
\usepackage[nice]{nicefrac}
\usepackage{lmodern}
\usepackage{mathrsfs}
\usepackage{qsymbols}
\usepackage{mathtools}
\mathtoolsset{showonlyrefs}
\usepackage{dsfont}
\usepackage{graphicx}
\usepackage{latexsym}
\usepackage{chngcntr}
\usepackage{paralist}
\usepackage[parfill]{parskip}
\usepackage{bm}
\usepackage{tikz-cd}
\usepackage{csquotes}
\usepackage[plainpages=false,pdfpagelabels,backref=page,
citecolor=red]{hyperref}
\usepackage{xcolor}
\hypersetup{
	colorlinks,
	linkcolor={cyan!90!black},
	citecolor={magenta},
	urlcolor={green!40!black}}

\let\oldproofname=\proofname
\renewcommand{\proofname}{\rm\bf{\oldproofname}}
\usepackage[shortlabels]{enumitem}

\newtheorem{theorem}{Theorem}[section]
\newtheorem{lemma}[theorem]{Lemma}
\newtheorem{proposition}[theorem]{Proposition}
\newtheorem{corollary}[theorem]{Corollary}
\newtheorem{assumption}[theorem]{Assumption}
\theoremstyle{definition}
\newtheorem{definition}[theorem]{Definition}
\newtheorem{remark}[theorem]{Remark}

\numberwithin{equation}{section}

\DeclareMathOperator{\e}{e}

\DeclareMathOperator{\Div}{div}

\DeclareMathOperator{\cc}{c}

\newcommand{\C}{\mathbb{C}}
\newcommand{\R}{\mathbb{R}}
\newcommand{\N}{\mathbb{N}}

\renewcommand{\H}{\mathrm{H}}
\newcommand{\W}{\mathrm{W}}

\DeclareRobustCommand{\Wdot}{\dot{\W}\protect{\vphantom{W}}}

\renewcommand{\L}{\mathrm{L}}

\newcommand{\LL}{\mathcal{L}}
\newcommand{\D}{\mathsf{D}}

\renewcommand{\d}{\mathrm{d}}

\DeclareMathOperator{\essinf}{essinf}
\newcommand{\les}{\lesssim}

\renewcommand{\S}{\mathrm{S}}

\DeclareMathOperator{\re}{Re}

\newcommand{\1}{\boldsymbol{1}}


\newcommand{\rC}{\mathrm{C}}


\newcommand{\cJ}{\mathcal{J}}

\newcommand{\cN}{\mathcal{N}}

\newcommand{\up}{\ldots}
\newcommand{\sub}{\subseteq}
\newcommand{\smooth}[1][]{\rC_{\cc}^{\infty}(#1)}


\def\XXint#1#2#3{{\setbox0=\hbox{$#1{#2#3}{%
				\int}$ }
		\vcenter{\hbox{$#2#3$ }}\kern-.6\wd0}}
\title{Explicit improvements for $\L^p$-estimates related to elliptic systems}
\author{Tim B\"ohnlein}
\author{Moritz Egert}
\address{Fachbereich Mathematik, Technische Universit\"at Darmstadt, Schlossgartenstr. 7, 64289 Darmstadt, Germany}
\email{boehnlein@mathematik.tu-darmstadt.de}
\email{egert@mathematik.tu-darmstadt.de}
%
\subjclass[2020]{Primary: 35J47, 47A60. Secondary: 46B70.}
\date{\today}
\dedicatory{}
\keywords{Gaussian estimates,  elliptic systems in divergence from, weighted Morrey spaces, Stein interpolation}


\begin{document}
\begin{abstract}
	We give a simple argument to obtain $\L^p$-boundedness for heat semigroups associated to uniformly strongly elliptic systems on $\R^d$ by using Stein interpolation between Gaussian estimates and hypercontractivity. Our results give $p$ explicitly in terms of ellipticity. It is optimal at the endpoint $p=\infty$. We also obtain $\L^p$-estimates for the gradient of the semigroup, where $p>2$ depends on ellipticity but not on dimension.
\end{abstract}
\maketitle

\section{Introduction}

In dimension $d \geq 3$ we consider uniformly strongly elliptic systems on $\R^d$ of $N\geq1$ equations in divergence form
\begin{equation*}
 (Lu)^{\alpha} = - (\Div (A \nabla u))^{\alpha}  = - \sum_{i,j =1}^d \sum_{\beta =1}^N \partial_i A^{\alpha, \beta}_{i,j} \partial_j u^{\beta} \qquad (\alpha =1, \up, N)
\end{equation*}
with bounded, measurable and complex coefficients, see Section~\ref{Section: Strongly elliptic systems} for precise definitions. This gives rise to a contraction semigroup $(\e^{-tL})_{t > 0}$ in $(\L^2)^N \coloneqq \L^2(\R^d; \C^N)$. Surprisingly little is know about explicit $\L^p$-estimates when no further regularity on the coefficients is imposed. For systems with minimally smooth coefficients we refer e.g.\@ to \cite{Dong-Kim}. More precisely, consider the set
\begin{equation*}
    \cJ(L) \coloneqq \left\{ p \in (1, \infty) : \e^{-t L} \; \text{is bounded in $\L^p$, uniformly for $t > 0$} \right\}.
\end{equation*}
By complex interpolation, it is an interval around $2$, the endpoints of which are often denoted by $p_{\pm}(L)$. All of our results will be stable under taking adjoints. Since $p_-(L) = (p_+(L^*))'$, we shall concentrate on the upper endpoint $p_+(L)$. It is known that $p_+(L) > 2^*$, where $2^* \coloneqq \nicefrac{2d}{(d-2)}$ is the Sobolev conjugate of $2$, and that the improvement $p_+(L) - 2^*$ can be arbitrarily small even when $N=1$~\cite[Sec.~2.2]{H_M_McI}. What seems to be missing though, are explicit lower bounds for $p_+(L)$ in terms of the data of $L$, such as ellipticity constants and dimensions, in particular when the improvement is expected to be large or even covers $p_+(L)=\infty$. Indeed, all results for systems (that we are aware of) are perturbative from the general $\L^2$-theory and provide small, non-quantifiable improvements~\cite{Tolksdorf, Auscher_Heat-Kernel, Auscher_Riesz_Trafo}. In contrast, we proceed by interpolation from the $\L^\infty$-theory for special systems described further below. Our results are new also for elliptic equations ($N=1$).

The number $p_+(L)$ is related to the optimal ranges of various $\L^p$-estimates for $L$, such as Riesz transforms, boundary value problems and functional calculus, see the introduction of \cite{Auscher-Egert} for a comprehensive account on the literature. Thus, improving lower bounds for $p_+(L)$, as we shall do here, leads to automatic improvements in all these topics.

All of our results are perturbative from the diagonal Laplacian system corresponding to $A=\1_{(\C^N)^d} \coloneqq (\delta_{\alpha,\beta}\delta_{i,j})^{\alpha, \beta}_{i,j}$ but not necessarily on a small scale. This is in the nature of things, because every strongly elliptic $A$ is an $\L^{\infty}$-perturbation of $\1_{(\C^N)^d}$ of size smaller than $1$ up to normalization:
\begin{equation*}
    \d(A) \coloneqq \min_{t\geq 0} \Vert \1_{(\C^N)^d} - t A \Vert_{\L^{\infty}(\R^d; \LL((\C^N)^d))} < 1.
\end{equation*}
The `distance' $\d(A)$ is a well-known measure of ellipticity~\cite{Koshelev}. It can be bounded from above and below in terms of the usual ellipticity constants and when $A=A^*$, there is an easy formula (Lemma~\ref{Strongly elliptic systems: Lemma: q(A) vs. d(A)}). The dimensional constant
\begin{equation}
\label{eq: delta(d)}
    \delta(d) \coloneqq \left(1 + \frac{(d-2)^2}{d-1} \right)^{-\frac{1}{2}}
\end{equation}
will play an important role in this paper. Our main result is as follows.

\begin{theorem}
\label{Introduction: Theorem: Application of the abstract interpolation result}
The following three statements hold.
\begin{enumerate}
    \item If $\d(A) \geq \delta(d)$, then (with $\nicefrac{2^*}{0} \coloneqq \infty$)
    \begin{equation*}
    p_+(L) \geq \frac{2^*}{1 - \frac{\ln(\d(A))}{\ln(\delta(d))}}.
    \end{equation*}
    \item If $\d(A) < \delta(d)$, then  $p_+(L) = \infty$.
    \item The result in (ii) is optimal in the sense that for each $\varepsilon >0$ there is some $A_{\varepsilon}$ with $\d(A_{\varepsilon}) \leq \delta(d) + \varepsilon$ and $p_+(L_{\varepsilon}) < \infty$.
\end{enumerate}
\end{theorem}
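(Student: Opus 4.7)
The first two parts rest on a common construction---a holomorphic family of coefficient matrices deforming the identity into $A$, on which one applies Stein's interpolation theorem---while (iii) is handled by a separate counterexample. After rescaling time (which leaves $p_+(L)$ invariant), I may assume that the minimum defining $\d(A)$ is attained at $t=1$, so $\Vert \1 - A \Vert_\infty = \d(A)$. For $z \in \C$ set
\begin{equation*}
    A_z \coloneqq \1 - z(\1 - A), \qquad L_z \coloneqq -\Div(A_z \nabla\,\cdot\,),
\end{equation*}
so $A_0 = \1$ and $A_1 = A$. The pointwise bound $\Vert \1 - A_z \Vert_\infty \leq |z|\,\d(A)$ gives strong ellipticity of $A_z$ throughout the disc $\{|z| < 1/\d(A)\}$, and $T_z(t) \coloneqq \e^{-tL_z}$ forms on that disc a holomorphic family of $L^2$-bounded semigroups, in the sense required by the analytic version of Stein's theorem.

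\textbf{Endpoint bounds.} The ``Laplacian side'' input is the classical Gaussian kernel bound for $\e^{t\Delta}$. On imaginary perturbations $L_{iy}$ the real part of $A_{iy}$ is $\1$, and a Davies exponential-weight argument combined with the Sobolev inequality $\Vert u \Vert_{L^{2^*}} \leq C_d \Vert \nabla u \Vert_{L^2}$ transfers the Gaussian estimate to $T_{iy}$; the transfer closes only as long as the imaginary perturbation remains dominated by $\delta(d)$, which is precisely the quantity in~\eqref{eq: delta(d)} needed to absorb Davies' correction term against the sharp Sobolev constant. On the other side of the strip, the sesquilinear-form inequality gives $L^2 \to L^2$ contractivity of $T_z$ throughout the ellipticity disc. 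Together these two inputs supply the endpoint data for Stein interpolation.

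\textbf{Interpolation for (i) and (ii).} Running Stein's three-lines theorem on the analytic family $z \mapsto T_z(t)$ between the Gaussian line (on which an $L^1 \to L^\infty$ bound holds) and a vertical line near the boundary of the ellipticity disc (on which only the $L^2 \to L^2$ bound is available) yields, at $z = 1$, an $L^p \to L^p$ bound whose exponent is the weighted geometric mean of the two endpoint exponents. The location of $z = 1$ relative to the two lines is governed by the two scales $\ln \d(A)$ (how far $A$ sits from $\1$) and $\ln \delta(d)$ (how far Davies' transfer of the Gaussian reaches on the imaginary axis), and computing the resulting interpolation exponent produces exactly $p_+(L) \geq 2^*/(1 - \ln(\d(A))/\ln(\delta(d)))$, which is (i). When $\d(A) < \delta(d)$, the Gaussian line already reaches past the relevant range, so the interpolation can be arranged with $z = 1$ strictly inside the Gaussian region; this delivers $L^\infty$-boundedness and proves (ii).

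\textbf{Optimality (iii) and main obstacle.} For (iii), I would exhibit a Meyers-type family of scalar equations with radial coefficients
\begin{equation*}
    A_\varepsilon(x) = \alpha_\varepsilon\,\frac{x \otimes x}{|x|^2} + \beta_\varepsilon \Bigl(I - \frac{x \otimes x}{|x|^2}\Bigr),
\end{equation*}
tuned so that $u_\varepsilon(x) = |x|^{-\gamma_\varepsilon}$ is a distributional solution of $L_\varepsilon u_\varepsilon = 0$ with $\gamma_\varepsilon$ just below $d/p$ for arbitrary prescribed finite $p$; the explicit symmetric-case formula in Lemma~\ref{Strongly elliptic systems: Lemma: q(A) vs. d(A)} then lets one solve for $(\alpha_\varepsilon,\beta_\varepsilon)$ so that $\d(A_\varepsilon) \to \delta(d)$ while $p_+(L_\varepsilon)$ stays finite. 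The principal technical obstacle sits in the Davies-type step for (i)--(ii): calibrating the exponential-weight computation so that the blow-up threshold falls \emph{exactly} at $\delta(d)$, since any slackness would not only weaken the explicit formula in (i) but more seriously misalign the dichotomy in (ii) with the optimality witnesses supplied by the Meyers family in (iii).
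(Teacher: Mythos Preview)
Your proposal has two substantive gaps, and they sit precisely where the paper does real work.

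\textbf{The mechanism behind (ii).} You attribute the Gaussian bounds on the ``inner'' part of the family to a Davies exponential-weight argument that allegedly closes exactly when the perturbation size falls below $\delta(d)$. This is not how the paper proceeds, and I do not see how to make your version work. Davies' method needs an \emph{a priori} ultracontractivity or Nash-type input to get started, which is exactly what is missing for systems with rough complex coefficients; and even where it applies, the threshold it produces comes from a log-Sobolev or Nash constant, not from the quantity in~\eqref{eq: delta(d)}. In the paper, $\delta(d)$ arises from Koshelev's weighted Morrey estimates for $L$-harmonic functions: one writes $Lu=0$ as $-\Delta u = -\Div((\1 - tA)\nabla u)$, tests against a specific function built from spherical harmonics, and obtains an inequality whose absorption constant is $c(\alpha,d,\varepsilon)$ as in~\eqref{eq: OPR whole space: Constant c(alpha,d)}; the limit $\alpha \to d-2$, $\varepsilon \to 0$ gives $c \to \delta(d)^{-1}$. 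This yields the quantitative H\"older property~$(\H)$ for $L$-harmonic functions (Proposition~\ref{OPR whole space: Proposition: H(mu)}), and only \emph{then} does Auscher--Tchamitchian's characterization convert $(\H)$ into Gaussian kernel bounds. So (ii) is proved \emph{before} any interpolation, and is then used as the inner-circle input for the Stein argument that gives~(i). Your ordering reverses this, and the replacement mechanism you propose is not known to deliver the sharp constant. Note also that your claim ``the real part of $A_{\mathrm{i}y}$ is $\1$'' fails when $A$ itself is complex.

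\textbf{The counterexample for (iii).} A scalar Meyers-type family with real radial coefficients cannot witness $p_+(L) < \infty$: by De~Giorgi--Nash--Moser, every real scalar divergence-form operator has $p_+(L)=\infty$. (Your candidate solution $|x|^{-\gamma}$ is not $L$-harmonic for such $A$ unless $\gamma \in \{0,d-2\}$ anyway.) The paper instead uses De~Giorgi's \emph{system} counterexample ($N=d$) with coefficients $A_{\mathrm{DG}}$, for which an explicit unbounded $L$-harmonic function $x/|x|^{b}$ exists; Koshelev computed that the parameters can be tuned so that $\d(A_{\mathrm{DG}})$ is arbitrarily close to $\delta(d)$ while $b>1$, and a localization argument then forces $p_+(L_{\mathrm{DG}}) < \infty$.

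Your Stein-interpolation framework for (i) is morally the same as the paper's, but the paper parametrizes the family by $A_z = \tau(\1 - r^{1-z}R^z B)$ so that the strip $\{0 \leq \re z \leq 1\}$ maps to an annulus, placing the ``Gaussian'' input on $\re z = 0$ (where $\d(A_z) < \delta(d)$ thanks to (ii)) and the generic $\L^2 \to \L^{2^*}$ bound on $\re z = 1$. The logarithmic formula in (i) then drops out from solving $r^{1-\theta}R^\theta = 1$ with $r = \delta(d)/\d(A)$ and $R = 1/\d(A)$.
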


The dimensional constants in Theorem~\ref{Introduction: Theorem: Application of the abstract interpolation result} are quite large in small dimensions and we collect some values in Figure~\ref{fig: OPR}.
\begin{figure}[!h]
\begin{center}
\begin{tabular}[h]{l|c|c|r}
$d$  & $\delta(d)$ & $-\ln(\delta(d))^{-1}$ & $\varrho(A)$\\
\hline
3  & $0.8165$  & $4.9326$ & $0.1010$\\
4  & $0.6547$  & $2.3604$ & $0.2087$\\
5  & $0.5547$  & $1.6968$ & $0.2864$\\
6  & $0.4880$ & $1.3936$ & $0.3441$
\end{tabular}
\end{center}
\caption{Approximate constants in Theorem~\ref{Introduction: Theorem: Application of the abstract interpolation result} in small dimensions. The third column contains the ellipticity ratio $\varrho(A) = \nicefrac{\lambda(A)}{\Lambda(A)} \in (0,1]$ that is sufficient for having $\d(A) = \delta(d)$
in the special case $A=A^*$, see Lemma~\ref{Strongly elliptic systems: Lemma: q(A) vs. d(A)}.}
\label{fig: OPR}
\end{figure}

Part (ii) is proved in Section~\ref{Section: OPR whole space} by combining results of Koshelev~\cite{Koshelev}, see also \cite{Leonardi}, with a characterization of Gaussian estimates and Hölder regularity of the kernel associated with $\e^{-tL}$ due to Auscher--Tchamitchian~\cite{Auscher_Tchamitchian_Kato}. In fact, in Theorem~\ref{OPR whole space: Theorem: H(mu) implies G(mu)} we shall not only prove that $\d(A) < \delta(d)$ implies $p_+(L) = \infty$, but that $\e^{-tL}$ has a H\"older regular integral kernel with Gaussian decay. Since $\delta(d) > \nicefrac{1}{\sqrt{d}}$, this disproves the conjecture in \cite[Chap.~1, Sec.~1.4.6]{Auscher_Tchamitchian_Kato} that the best possible perturbation result would be $\d(A) \in O(d^{-1})$. The optimality statement in (iii) is almost classical, see Proposition~\ref{OPR whole space: Proposition: p+ optimal}.

Part (i) is proved in Section~\ref{Section: A priori estimates: Interpolation approach}. The idea is to rewrite $\d(A) < 1$ as $A = \tau (\1_{(\C^N)^d} - B)$, where $\| B \|_{\infty} =\d(A)$ and $\tau > 0$. We embed $A$ as $A_1$ into an analytic family of elliptic matrices given by
\begin{equation*}
    A_z \coloneqq \tau (\1_{(\C^N)^d} - z B),
\end{equation*}
where $r \leq |z|\leq R$ with $0 < r < 1 < R$. Then, in the spirit of Stein interpolation, we estimate $\e^{-tL} = \e^{-t L_1}$ by using the generic information $p_+(L_z) \geq 2^*$ on the outer circle $|z| = R$ and $p_+(L_z) = \infty$ on the inner circle $|z|=r$ provided $r$ is small. This gives a lower bound for $p_+(L_1)$ that becomes the larger, the closer $z=1$ is to the inner circle and the farther away it is from the outer one. Thus, the best bound is achieved when $r, R$ are the largest possible and the optimal choice for $r$ comes from (ii). We believe that this simple analytic perturbation argument is of independent interest and has multiple applications to other types of $\L^p$-estimates for divergence form operators.

Writing Theorem~\ref{Introduction: Theorem: Application of the abstract interpolation result} (i) as $\nicefrac{1}{2^*} - \nicefrac{1}{p_+(L)} \geq \nicefrac{\ln(\d(A))}{2^* \ln (\delta(d))} \eqqcolon \varepsilon(d, \d(A))$, we see that $\varepsilon(d,\d(A)) \to 0$ as $d \to \infty$. Inspired by Stein's result~\cite{Stein_d_independence_of_RT_norm} on dimensionless bounds for the Riesz transform, we ask whether an improvement can be given independently of $d$. To this end, it will be advantageous to consider
\begin{equation*}
    \cN(L) \coloneqq \left\{ p \in (1, \infty) : \sqrt{t} \nabla \e^{-t L} \; \text{is bounded in $\L^p$, uniformly for $t > 0$} \right\},
\end{equation*}
instead of $\cJ(L)$. It is again an interval around $2$. The left and right endpoints of $\cN(L)$ are denoted by $q_{\pm}(L)$ and it is a fact that $q_-(L) = p_-(L)$ and $p_+(L) \geq q_+(L)^*$, the Sobolev conjugate of $q_+(L)$~\cite[Sec.\@ 3.4]{Auscher_Riesz_Trafo}. It follows that the improvement $q_+(L)-2$ can be arbitrarily small. In the next result, proved in Section~\ref{Section: dimensionless}, we improve $q_+(L)$ in terms of $\d(A)$ alone. Writing the conclusion as $\nicefrac{1}{2} - \nicefrac{1}{q_+(L)} \geq \varepsilon(\d(A))$ gives the dimensionless improvement $\nicefrac{1}{2^*} - \nicefrac{1}{p_+(L)} \geq \varepsilon(\d(A))$.

\begin{theorem}
\label{Introduction: Theorem: q+ improvement}
It holds
\begin{equation*}
    q_+(L) \geq
    \begin{cases}
    \frac{2}{1 + \frac{\sigma -1}{\sigma^2} \ln(\d(A))} &\quad \text{if} \quad \frac{1}{4 (\sigma -1)^2} \leq \d(A), \\
    \frac{1}{2 \sqrt{\d(A)}} +1  &\quad \text{if} \quad \d(A) \leq \frac{1}{4 (\sigma -1)^2},
    \end{cases}
\end{equation*}
where $\sigma \approx 5.69061$ is the unique real solution to
\begin{equation*}
    \ln (2 \sigma -2 ) = \frac{\sigma(\sigma-2)}{2(\sigma-1)}.
\end{equation*}
\end{theorem}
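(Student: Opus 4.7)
The proof plan mirrors the blueprint of Theorem~\ref{Introduction: Theorem: Application of the abstract interpolation result}(i) but is applied to the gradient operator $T_z(t) \coloneqq \sqrt{t}\,\nabla\e^{-t L_z}$ and is arranged so that every constant along the way is dimensionless. As there, I embed $A$ into the analytic family $A_z \coloneqq \tau(\1_{(\C^N)^d} - z B)$ on the annulus $r \leq |z| \leq R$, subject to $R\d(A)<1$ so that ellipticity is preserved on the outer boundary and $A_0 = \tau\1_{(\C^N)^d}$ corresponds to the diagonal Laplacian system $L_0$.

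The outer boundary $|z|=R$ contributes a cheap dimensionless estimate: a Gårding-type argument yields $\|T_z(t)\|_{\L^2 \to \L^2} \leq C$ with $C$ depending only on $R\d(A)$. The technical heart is therefore the inner boundary $|z|=r$, where I need $\|T_z(t)\|_{\L^{p_0} \to \L^{p_0}} \leq C_{p_0}$ for some $p_0>2$ with a dimensionless constant. For this I would split $L_z = L_0 + \tau z\,\Div(B\nabla)$ and expand $T_z(t)$ by Duhamel around $\e^{-t L_0}$. The leading term $\sqrt{t}\,\nabla\e^{-t L_0}$ factors as $\tau^{-1/2}\,\sR\circ\phi(\tau t(-\Delta))$ with $\phi(s)=s^{1/2}\e^{-s}$ and $\sR$ the vector Riesz transform; Stein's dimension-free theorem~\cite{Stein_d_independence_of_RT_norm} handles $\sR$ in every $\L^{p_0}$, while $\phi(\tau t(-\Delta))$ is controlled through its subordination representation against the contractive semigroup $\e^{\tau t\Delta}$. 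Each subsequent Duhamel term inherits a factor $r\d(A)$ multiplied by a fixed scale-invariant $\L^{p_0}$-mapping, so the Neumann series converges once $r\d(A)$ lies below an explicit dimensionless threshold.

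With both boundary estimates in hand, the substitution $w = \log z$ converts the annulus into a strip and a strip version of Stein interpolation applies. At $z=1$ this yields
\begin{equation*}
\|T_1(t)\|_{\L^{p_\theta}\to \L^{p_\theta}} \leq C,\qquad \frac{1}{p_\theta} = \frac{1-\theta}{p_0} + \frac{\theta}{2},\qquad \theta = \frac{\log(1/r)}{\log(R/r)},
\end{equation*}
so that $q_+(L) \geq p_\theta$ by definition of $\cN(L)$. The main obstacle is the ensuing two-parameter optimization in $(r,p_0)$: choose $R$ as close to $1/\d(A)$ as ellipticity permits, then balance a large $p_0$, which forces $r$ small and hence $\theta$ large, against a small $\theta$, which favors $r$ close to $1$ but caps $p_0$. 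Solving the first-order conditions produces the transcendental equation $\ln(2\sigma-2) = \sigma(\sigma-2)/(2(\sigma-1))$ for the critical ratio, and a boundary-versus-interior analysis of the optimum delivers exactly the dichotomy at $\d(A)=1/(4(\sigma-1)^2)$: above this threshold the optimizer is interior and yields the first formula of the theorem, below it the optimizer saturates the perturbative convergence constraint on the inner boundary and yields the second formula $1/(2\sqrt{\d(A)})+1$.
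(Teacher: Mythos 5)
Your proposal takes a genuinely different route from the paper, and as written it has two concrete gaps. First, the inner-boundary estimate via Duhamel does not close: iterating
\begin{equation*}
\e^{-tL_z}=\e^{-tL_0}+\tau z\int_0^t \e^{-(t-s)L_0}\,\Div\bigl(B\nabla \e^{-sL_z}\bigr)\,\d s
\end{equation*}
requires $\L^{p_0}$-bounds on $\nabla\e^{-(t-s)L_0}\Div$, whose operator norm scales like $(t-s)^{-1}$; the resulting time integral is not convergent near $s=t$, so the Neumann series you describe does not converge no matter how small $r\,\d(A)$ is. The paper avoids this entirely by working at the level of the inverse rather than the semigroup: it factorizes $-\Div A\nabla=(t^*)^{-1}(-\Delta)^{\nicefrac{1}{2}}(\1+R^*BR)(-\Delta)^{\nicefrac{1}{2}}$ with $R$ the Riesz transform and inverts $\1+R^*BR$ by a genuine (time-free) Neumann series on $\L^p$, which converges precisely when $c(p)^2\,\d(A)<1$; the characterization of $q_+(L)$ through invertibility of $-\Div A\nabla\colon(\Wdot^{1,p})^N\to(\Wdot^{-1,p})^N$ from \cite{Auscher-Egert} then gives $q_+(L)\geq p$. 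This factorization is also what produces the $\sqrt{\d(A)}$ in the final answer (the Riesz transform enters twice), a feature your scheme has no mechanism to reproduce.

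Second, the claim that your two-parameter optimization in $(r,p_0)$ ``produces'' the equation $\ln(2\sigma-2)=\nicefrac{\sigma(\sigma-2)}{2(\sigma-1)}$ is unsubstantiated. That equation is the first-order condition for minimizing $\tfrac{2q}{q-2}\ln(2q-2)$ over $q\geq 2$, which arises from interpolating (Riesz--Thorin) between $c(2)=1$ and the Dragi\v{c}evi\'{c}--Volberg bound $c(q)\leq 2(q-1)$ for the Riesz transform; the threshold $\d(A)=\nicefrac{1}{4(\sigma-1)^2}$ is exactly where $\nicefrac{1}{\sqrt{\d(A)}}=2(\sigma-1)$, i.e.\ where one switches from the interpolated bound to the linear bound when inverting $\Phi(p)$. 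Neither of these inputs appears in your argument (Stein's qualitative dimension-free theorem is not enough; the quantitative constant $2(p-1)$ is essential), so the specific constants in the statement cannot emerge from the optimization you outline. To repair the proof you would need to abandon the semigroup-level Duhamel expansion in favor of the resolvent factorization through Riesz transforms, and import the explicit bound $c(p)\leq 2(p-1)$ together with its interpolated improvement near $p=2$.
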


For curiosity, let us mention that the first bound in Theorem~\ref{Introduction: Theorem: q+ improvement} produces a larger improvement for $p_+(L)$ compared to Theorem~\ref{Introduction: Theorem: Application of the abstract interpolation result} (i) in dimension $d \geq 922100$.

It would be interesting to know to what extent our results can be extended to more general domains and boundary conditions. In case of Theorem \ref{Introduction: Theorem: Application of the abstract interpolation result} we provide an extension to bounded $\rC^1$-domains with Dirichlet boundary conditions in Section~\ref{Section: OPR for bounded domains}.

\textbf{Implicit constants.} We write $\mathrm{X} \les_a \mathrm{Y}$, if $\mathrm{X} \leq c \mathrm{Y}$ for some $c = c(a) > 0$.

\textbf{Acknowledgment} The authors would like to thank Salvatore Leonardi for helping us with the literature underlying Section~\ref{Section: OPR whole space}.

\section{Uniformly strongly elliptic systems} \label{Section: Strongly elliptic systems}

Let $d \geq 3$, $N \geq 1$ and $A \colon \R^d \to \LL((\C^N)^d)$ be measurable. We assume that $A$ is \textbf{uniformly strongly elliptic}, that is
\begin{equation*}
     \lambda(A)\coloneqq \underset{x \in \R^d}{\essinf} \min_{|\xi| =1} \re ( A(x) \xi \cdot \overline{\xi} ) > 0 \quad \& \quad \Lambda(A) \coloneqq \Vert A \Vert_{\L^{\infty}(\R^d; \LL((\C^N)^d))} < \infty.
\end{equation*}
Let $L = - \Div (A \nabla \cdot)$ be realized as an m-accretive operator in $(\L^2)^N$ via the sesquilinear form
\begin{equation*}
    a(u,v) \coloneqq \int_{\R^d} A \nabla u \cdot \overline{\nabla v} \, \d x = \sum_{i,j =1}^d \sum_{\alpha, \beta =1}^N \int_{\R^d} A^{\alpha, \beta}_{i,j} \partial_j u^{\beta} \overline{\partial_i v^{\alpha}} \, \d x \qquad (u,v \in (\W^{1,2})^N)
\end{equation*}
and let $(\e^{-tL})_{t\geq 0}$ be the associated contraction semigroup, see \cite[Chap.\@ 6]{Kato}. We use the \textbf{distance function}
\begin{equation} \label{eq: Distance function}
    \d(A) \coloneqq \min_{t \geq 0} \Vert \1_{(\C^N)^d} - t A \Vert_{\L^{\infty}(\R^d; \LL((\C^N)^d))},
\end{equation}
to measure ellipticity. By compactness, the minimum is attained in some $t^* \geq 0$. Let us verify that $t^*>0$ and that $\d(A)$ is an appropriate quantity to measure ellipticity.

\begin{lemma} \label{Strongly elliptic systems: Lemma: q(A) vs. d(A)}
If $\varrho(A) \coloneqq \nicefrac{\lambda(A)}{\Lambda(A)}$ denotes the ellipticity quotient of $A$, then
\begin{equation}
    \frac{1 -\varrho(A)}{1 + \varrho(A)} \leq \d(A) \leq \sqrt{1 - \varrho(A)^2}.
\end{equation}
Furthermore, if $A = A^*$, then the first inequality becomes an equality.
\end{lemma}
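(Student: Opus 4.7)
The plan rests on the pointwise identity, valid for $|\xi|=1$ and a.e.\ $x \in \R^d$,
\begin{equation*}
\|(\1_{(\C^N)^d} - tA(x))\xi\|^2 = 1 - 2t\,\re(A(x)\xi\cdot\overline{\xi}) + t^2\,|A(x)\xi|^2,
\end{equation*}
combined with the chain $\lambda(A) \leq \re(A(x)\xi\cdot\overline{\xi}) \leq |A(x)\xi| \leq \Lambda(A)$, whose middle inequality is Cauchy--Schwarz. I would handle the upper bound, the lower bound, and the self-adjoint equality in sequence.

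For the upper bound, insert $\re(A\xi\cdot\overline{\xi}) \geq \lambda(A)$ and $|A\xi| \leq \Lambda(A)$ into the identity to obtain $\|(\1_{(\C^N)^d} - tA)\xi\|^2 \leq 1 - 2t\lambda(A) + t^2\Lambda(A)^2$, then minimise over $t \geq 0$ at $t = \lambda(A)/\Lambda(A)^2$, reaching $1 - \varrho(A)^2$. Since $\sqrt{1 - \varrho(A)^2} < 1$ while the functional in~\eqref{eq: Distance function} equals $1$ at $t = 0$, this also confirms $t^* > 0$.

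For the lower bound I would construct two families of near-extremal test vectors from the essential infimum and supremum definitions of $\lambda(A)$ and $\Lambda(A)$. First, for each $\varepsilon > 0$ there exist $x$ and $|\xi|=1$ with $|A(x)\xi| > \Lambda(A) - \varepsilon$; using $\re(A\xi\cdot\overline{\xi}) \leq |A\xi|$ in the identity gives $\|(\1_{(\C^N)^d} - tA)\xi\|^2 \geq (1 - t|A\xi|)^2$, and letting $\varepsilon \to 0$ yields $\|\1_{(\C^N)^d} - tA\|_\infty \geq |1 - t\Lambda(A)|$. Second, one can pick $x, \xi$ with $\re(A\xi\cdot\overline{\xi}) < \lambda(A) + \varepsilon$; coupled with $|A\xi|^2 \geq \lambda(A)^2$ (which follows from the same Cauchy--Schwarz step), this produces $\|(\1_{(\C^N)^d} - tA)\xi\|^2 \geq (1 - t\lambda(A))^2 - 2t\varepsilon$, hence $\|\1_{(\C^N)^d} - tA\|_\infty \geq |1 - t\lambda(A)|$. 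Taking the maximum of both lower bounds and minimising $\max(|1 - t\lambda|,|1 - t\Lambda|)$ at the balance point $t = 2/(\lambda+\Lambda)$ gives exactly $(\Lambda - \lambda)/(\Lambda + \lambda) = (1 - \varrho(A))/(1 + \varrho(A))$.

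When $A = A^*$ the pointwise matrix $A(x)$ is Hermitian and the spectral theorem yields the exact formula $\|\1_{(\C^N)^d} - tA(x)\| = \max_{\mu \in \sigma(A(x))} |1 - t\mu|$. Since $\sigma(A(x)) \subseteq [\lambda(A), \Lambda(A)]$, we obtain the matching upper bound $\|\1_{(\C^N)^d} - tA\|_\infty \leq \max(|1 - t\lambda|, |1 - t\Lambda|)$, turning the previous chain into an equality. The main subtlety is the lower bound $|1 - t\Lambda|$: because $\Lambda(A)$ is an operator-norm quantity rather than a numerical-range one, $\re(A\xi\cdot\overline{\xi})$ need not approach $\Lambda(A)$ when $A$ is not self-adjoint, so one must select $\xi$ maximising $|A\xi|$ and then discard the cross term via Cauchy--Schwarz -- which is the reason for putting that inequality at the centre of the setup.
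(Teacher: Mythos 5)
Your argument is correct, and the upper bound and the self-adjoint equality run along the same lines as the paper's proof (minimise the quadratic at $t=\nicefrac{\lambda(A)}{\Lambda(A)^2}$; diagonalise $\1_{(\C^N)^d}-tA(x)$ at $t=\nicefrac{2}{(\lambda(A)+\Lambda(A))}$ and locate the eigenvalues in $[1-t\Lambda(A),\,1-t\lambda(A)]$). The lower bound, however, is where you genuinely diverge. You establish the two pointwise lower bounds $\Vert \1_{(\C^N)^d}-tA\Vert_\infty \geq |1-t\Lambda(A)|$ and $\geq |1-t\lambda(A)|$ for \emph{every} $t\geq 0$ via near-extremal test vectors and Cauchy--Schwarz, and then compute $\min_{t\geq 0}\max(|1-t\lambda|,|1-t\Lambda|)=\nicefrac{(\Lambda-\lambda)}{(\Lambda+\lambda)}$. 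The paper instead works only at the minimiser $t^*$: from $\re((\1_{(\C^N)^d}-t^*A)\xi\,|\,\xi)\leq \d(A)$ it reads off $t^*\lambda(A)\geq 1-\d(A)$, and from the triangle inequality in operator norm it gets $t^*\Lambda(A)\leq 1+\d(A)$; dividing gives $\varrho(A)\geq\nicefrac{(1-\d(A))}{(1+\d(A))}$, which rearranges to the claim. That route is shorter and sidesteps entirely the point you flag as the ``main subtlety'' (producing vectors with $|A(x)\xi|$ close to $\Lambda(A)$), because the bound $t^*\Lambda(A)\leq 1+\d(A)$ needs no extremising sequence. Your version costs a little more work but yields the stronger statement $\Vert\1_{(\C^N)^d}-tA\Vert_\infty\geq\max(|1-t\lambda(A)|,|1-t\Lambda(A)|)$ for all $t$, which in the self-adjoint case identifies $\Vert\1_{(\C^N)^d}-tA\Vert_\infty$ exactly rather than only its minimum; both are complete proofs.
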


\begin{proof}
We have for all $t \geq 0$, each normalized $\xi \in (\C^N)^d$ and almost every $x \in \R^d$ that
\begin{equation*}
    |\xi - t A(x) \xi|^2 = 1 - 2 t \re (A(x) \xi \, | \, \xi ) + t^2 |A(x) \xi|^2 \leq 1 -2 t \lambda(A) + t^2 \Lambda(A)^2.
\end{equation*}
We choose $t \coloneqq \nicefrac{\lambda(A)}{\Lambda(A)^2}$ to get the upper bound for $\d(A)$.

Now, fix $t^* > 0$ such that $\Vert \1_{(\C^N)^d} - t^* A \Vert_{\infty} = \d(A)$. Then
\begin{equation*}
    \re (t^* A(x) \xi \, | \, \xi) = 1 - \re ((\1_{(\C^N)^d} - t^* A(x)) \xi \, | \, \xi ) \geq 1 - \d(A).
\end{equation*}
Hence, $t^* \lambda(A) \geq 1 - \d(A) >0$ and by the triangle inequality $t^* \Lambda(A) \leq 1 + \d(A)$. Rearranging gives the lower bound for $\d(A)$.

For the second claim we take $t \coloneqq \nicefrac{2}{(\Lambda(A) + \lambda(A))}$. Then $\1_{(\C^N)^d} - t A(x)$ is a self-adjoint matrix for a.e.\ $x\in \R^d$ with eigenvalues contained in
\begin{equation*}
	[1 - t \Lambda(A), 1 - t \lambda(A)] = \big[- \tfrac{\Lambda(A) - \lambda(A)}{\Lambda(A)+ \lambda(A)}, \tfrac{\Lambda(A) - \lambda(A)}{\Lambda(A) + \lambda(A)} \big] = \big[ - \tfrac{1 - \varrho(A)}{1 + \varrho(A)}, \tfrac{1 - \varrho(A)}{1 + \varrho(A)} \big],
\end{equation*}
and thus
\begin{equation*}
	\d(A) \leq \tfrac{1 - \varrho(A)}{1 + \varrho(A)}
\end{equation*}
by the spectral radius formula.
\end{proof}


The next smoothing of the coefficients lemma will be important in Section~\ref{Section: OPR whole space} and Section~\ref{Section: OPR for bounded domains} to absorb terms, which are a priori not finite for non smooth coefficients. We include the simple proof for convenience.
To this end, we let $\eta \in \rC_{\cc}^{\infty}(B(0,1))$ be non-negative with $\int_{\R^d} \eta \, \d x = 1$ and put $\eta_n(x) \coloneqq n^d \eta(nx)$ for $n \in \N$ and $x \in \R^d$. We define the smoothed coefficients $A_n \coloneqq A * \eta_n$.

\begin{lemma} \label{Strongly elliptic systems: Lemma: Approximation of A}
Let $O \sub \R^d$ be open and bounded, $u \in \W^{1,2}(O)^N$ be a weak solution to $L u = 0$ in $O$ and $u_n \in \W^{1,2}(O)^N$ be the unique weak solution to
\begin{equation*}
    - \Div (A_n \nabla u_n) = 0 \quad \text{in} \; O \quad \& \quad u -u_n \in \W^{1,2}_0(O)^N.
\end{equation*}
Then the following assertions are satisfied.

\begin{enumerate}
    \item For all $n \in \N$ it holds $\lambda(A_n) \geq \lambda(A)$, $\Lambda(A_n) \leq \Lambda(A)$, $\d(A_n) \leq \d(A)$ and $u_n$ is smooth in $O$.
    \item Along a subsequence $u_n \to u$ in $\L^2(O)^N$ and a.e.\@ on $O$.
\end{enumerate}
\end{lemma}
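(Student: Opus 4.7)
The structure of the lemma suggests splitting along its two items.

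For part (i), I would handle the three scalar quantities via the convolution structure of $A_n = A * \eta_n$. The bound $\lambda(A_n) \geq \lambda(A)$ comes from pushing $\re$ inside the integral defining $A_n$ and using $\eta_n \geq 0$ with $\int \eta_n = 1$:
\begin{equation*}
\re(A_n(x)\xi \cdot \overline{\xi}) = \int_{\R^d} \eta_n(y) \re(A(x-y)\xi \cdot \overline{\xi}) \, \d y \geq \lambda(A)|\xi|^2.
\end{equation*}
The bound $\Lambda(A_n) \leq \Lambda(A)$ follows from Young's inequality, and for $\d(A_n) \leq \d(A)$ I would pick a minimizer $t^* \geq 0$ for $A$ in~\eqref{eq: Distance function} and note that $\1_{(\C^N)^d} - t^* A_n = (\1_{(\C^N)^d} - t^* A) \ast \eta_n$, so that taking $\L^\infty$-norms and then minimizing over $t \geq 0$ yields the claim. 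Smoothness of $u_n$ on $O$ is then classical interior regularity for strongly elliptic systems with smooth coefficients: the difference-quotient method gives $u_n \in \H^k_{\loc}(O)^N$ for all $k$, and Sobolev embedding closes the loop.

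For part (ii), the plan is an energy estimate for $w_n \coloneqq u_n - u \in \W^{1,2}_0(O)^N$. Testing both equations against $w_n$ and subtracting gives
\begin{equation*}
    \int_O A_n \nabla w_n \cdot \overline{\nabla w_n} \, \d x = \int_O (A - A_n) \nabla u \cdot \overline{\nabla w_n} \, \d x.
\end{equation*}
Taking real parts and using $\lambda(A_n) \geq \lambda(A) > 0$ from part (i), followed by Cauchy--Schwarz on the right, yields
\begin{equation*}
    \lambda(A) \Vert \nabla w_n \Vert_2 \leq \Vert (A - A_n) \nabla u \Vert_2.
\end{equation*}
By the Lebesgue differentiation theorem, $A_n \to A$ a.e.\@ on $\R^d$, and since $|(A - A_n) \nabla u|^2 \leq 4 \Lambda(A)^2 |\nabla u|^2 \in \L^1(O)$, dominated convergence gives $(A - A_n)\nabla u \to 0$ in $\L^2(O)^{Nd}$. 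The Poincar\'{e} inequality on the bounded domain $O$ applied to $w_n \in \W^{1,2}_0(O)^N$ then yields $u_n \to u$ in $\L^2(O)^N$, and the a.e.\@ convergence on $O$ follows from extracting a subsequence.

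I do not anticipate a genuine obstacle: everything reduces to standard mollifier identities and one energy estimate. The only point requiring a small amount of care is the monotonicity $\d(A_n) \leq \d(A)$, where one must resist the temptation to let the minimizer depend on $n$ before taking the $\L^\infty$-norm; choosing the minimizer for $A$ first and then passing through convolution is what makes the inequality work.
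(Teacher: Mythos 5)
Your part (i) is essentially the paper's argument: the three monotonicity claims all come from pushing the relevant quantity through the convolution $A_n = A*\eta_n$ with $\eta_n\geq 0$, $\int\eta_n=1$ (and your caution about fixing the minimizer $t^*$ for $A$ \emph{before} convolving is exactly the point the paper also makes), while smoothness of $u_n$ is interior regularity for smooth coefficients. Part (ii), however, takes a genuinely different and in fact sharper route. The paper argues by compactness: it first shows $(u_n-u)_n$ is bounded in $\W^{1,2}_0(O)^N$ via Lax--Milgram, extracts a weakly convergent subsequence, upgrades to strong $\L^2$ and a.e.\@ convergence by Rellich, and identifies the limit with $u$ by passing to the limit in the weak formulation and invoking uniqueness. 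You instead test both equations with $w_n=u_n-u$ to get the energy identity $\int_O A_n\nabla w_n\cdot\overline{\nabla w_n}=\int_O(A-A_n)\nabla u\cdot\overline{\nabla w_n}$, use the uniform ellipticity from (i) and Cauchy--Schwarz, and conclude $\|\nabla w_n\|_2\to 0$ from $A_n\to A$ a.e.\@ together with dominated convergence (the same ingredient the paper uses, only to pass to the limit in the test-function pairing). Your version is correct and buys more: the \emph{whole} sequence converges, and in $\W^{1,2}_0(O)^N$ rather than merely $\L^2$, with a subsequence needed only for the a.e.\@ statement; the price is that it leans on the specific mollification structure of $A_n$, whereas the compactness argument would survive under weaker hypotheses on the approximating coefficients. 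One cosmetic point: justify explicitly that $w_n\in\W^{1,2}_0(O)^N$ is an admissible test function for both weak formulations (density of $\smooth[O]^N$ in $\W^{1,2}_0(O)^N$), since the definition of $L$-harmonicity is stated with smooth compactly supported test functions.
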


\begin{proof}
As $A_n$ is smooth, so is $u_n$ by elliptic regularity theory, e.g.\ \cite[Sec.~6.3.1, Thm.~3]{Evans_PDE} adapted to systems. The rest of (i) follows from
\begin{equation*}
    A_n(x)\xi \cdot \overline{\zeta} = \int_{\R^d} \eta_n(y) A(x-y) \xi \cdot \overline{\zeta} \, \d y \qquad (\xi, \zeta \in (\C^{N})^d).
\end{equation*}
For instance, to prove that $\d(A_n) \leq \d(A)$, we let $t > 0$ be such that $\Vert \1 _{(\C^N)^d}-t A \Vert_{\infty} = \d(A)$ and use $\int_{\R^d} \eta_n(y) \, \d y =1$ twice in order to get for all $\xi \in (\C^N)^d$ that
\begin{equation*}
	|(\1_{(\C^N)^d} - t A_n(x)) \xi| = \left| \int_{\R^d} \eta_n(y) (\xi -t A (x-y) \xi) \, \d y \right|  \leq \d(A) \int_{\R^d} \eta_n(y) \, \d y = \d(A).
\end{equation*}
In order to prove (ii), let us show in a first step that $(v_n)_{n} \coloneqq (u_n - u)_{n} \sub \W^{1,2}_0(O)^N$ is bounded. Indeed, since
\begin{equation*}
    -\Div(A_n \nabla v_n) = \Div(A_n \nabla u) \quad \text{in} \; O,
\end{equation*}
this follows from the Lax-Milgram lemma and (i). Thus, we can find a subsequence $(v_k)_{k}$, and some $v \in \W^{1,2}_0(O)^N$ such that $v_k \to v$ weakly in $\W^{1,2}_0(O)^N$. By compactness, we can additionally assume $v_k \to v$ strongly in $\L^2(O)^N$ and a.e.\@ on $O$. Put $w \coloneqq v + u$. In particular, $u_k \to w $ in $\L^2(O)^N$ and a.e.\@ on $O$, and $\nabla u_k \to \nabla w$ weakly in $\L^2(O)^{dN}$. We claim that $w = u$. To this end, we fix some $\varphi \in \smooth[O]^N$. Then
\begin{equation*}
    0 = \int_O A_k \nabla u_k \cdot \overline{\nabla \varphi} \, \d x = \int_O \nabla u_k \cdot \overline{A_k^* \nabla \varphi} \, \d x \longrightarrow \int_O \nabla w \cdot \overline{A^* \nabla \varphi} \, \d x = \int_O A \nabla w \cdot \overline{\varphi} \, \d x,
\end{equation*}
using also strong $\L^2$-convergence $A_k^* \nabla \varphi \to A^* \nabla \varphi$, which follows from dominated convergence. This proves that $w$ solves
\begin{equation*}
    L w = 0 \quad \text{in} \; O \quad \& \quad w -u \in \W^{1,2}_0(O),
\end{equation*}
hence $w =u$.
\end{proof}

\begin{remark} \label{Strongly elliptic systems: Remark: A not defined on R^d}
If $A$ would be only defined on $O$, then we can extend it to $\R^d$ without changing the ``distance'': Simply let $t^* > 0$ be such that $\Vert \1_{(\C^N)^d} - t^* A \Vert_{\L^{\infty}(O)} = \d(A)$ and extend $A$ to $\R^d$ by $(t^*)^{-1} \1_{(\C^N)^d}$. Hence, we can always assume that $A$ is defined on $\R^d$.
\end{remark}

\section{New thoughts on old results of Koshelev} \label{Section: OPR whole space}

In a series of articles, culminating in the monograph~\cite{Koshelev}, Koshelev studied qualitative (H\"older) regularity of weak solutions to elliptic systems. In this section we explain how they lead us to an optimal perturbation result for Gaussian estimates for heat semigroups, when reinterpreted appropriately as quantitative statements.

\begin{definition}
Let $O \sub \R^d$ be open. We call a function $u \in \W^{1,2}(O)^N$ \textbf{$\boldsymbol{L}$-harmonic in $O$}, if we have for all $\varphi \in \smooth[O]^N$ that
\begin{equation*}
    \int_O A \nabla u \cdot \overline{\nabla \varphi} \, \d x =0.
\end{equation*}
\end{definition}

The appropriate setting to study regularity of $L$-harmonic functions turns out to be the following \textbf{weighted Morrey spaces} $\H_{\alpha}(O)^N$, $\alpha \in \R$, which are defined as the spaces of all $u \in \W^{1,2}(O)^N$ modulo $\C^N$ for which the norm
\begin{equation*}
    \Vert u \Vert_{\H_{\alpha}(O)} \coloneqq \sup_{x_0 \in O} \Vert u \Vert_{\H_{\alpha, x_0}(O)}, \quad \text{where} \quad \Vert u \Vert_{\H_{\alpha, x_0}(O)} \coloneqq \left( \int_{O} |\nabla u|^2 |x-x_0|^{\alpha} \, \d x \right)^{\frac{1}{2}},
\end{equation*}
is finite. For $\alpha > d-2$ sufficiently close to $d-2$ and $\varepsilon > 0$ small enough we have
\begin{equation}
 \label{eq: OPR whole space: Constant c(alpha,d)}
	c(\alpha, d, \varepsilon) \coloneqq \left( 1 + \frac{\alpha (d-2)}{d-1} + \varepsilon \right)^{\frac{1}{2}} \left( 1 - \frac{\alpha (\alpha - (d-2))}{2 (d-1)} - \varepsilon \right)^{-1} >0.
\end{equation}
This quantity will play an important role. In fact, $c(\alpha,d, \varepsilon) \to \delta(d)^{-1}$ in the limit as $\alpha \to d-2$ and $\varepsilon \to 0$. From now on we shall assume $\d(A) < \delta(d)$.

We begin by looking at $L$-harmonic functions on the unit ball $B$. Let $t > 0$. Guided by the perturbation principle in Lemma~\ref{Strongly elliptic systems: Lemma: q(A) vs. d(A)}, it begins with writing the equation $Lu = 0$ in $B$ in the weak sense as
\begin{equation} \label{eq: OPR whole space: Delta u = Div F}
    - \Delta u = - \Div(F) \quad \text{with} \quad F \coloneqq (\1_{(\C^{N})^d} - t A) \nabla u.
\end{equation}
Due to technical reasons we replace $A$ by $A_n$ and $u$ by $u_n$ as defined in Lemma \ref{Strongly elliptic systems: Lemma: Approximation of A} and call the term on the right-hand side $- \Div(F_n)$. In addition, we choose $t > 0$ such that $\d(A_n) = \Vert \1_{(\C^N)^d} - t A_n \Vert_{\infty}$.

Temporarily, fix $x_0 \in \frac{1}{4} B$. In order to derive optimal Morrey estimates for the solutions $u_n$, Koshelev considers two variational integrals
\begin{align*}
	X'(u_n,v_n) &\coloneqq \int_{\frac{1}{4}B} \nabla u_n \cdot \nabla v_n \, \d x,  \\
	Y_{-\alpha}'(v_n) &\coloneqq \int_{\frac{1}{4}B} |\nabla v_n|^2 |x - x_0|^{\alpha} \, \d x,
\end{align*}
where $\alpha>d-2$ is as above and $v_n$ is an ingeniously chosen test function for the equation on $\frac{1}{4}B$ that they constructs from $u_n$ using spherical harmonics \cite[Equ.~(2.3.2)]{Koshelev}. The precise formula for $v_n$ is not needed here -- it suffices to use the estimates below ``off-the-shelf''. In fact, this specific $v_n$ dates back to Giaquinta and Ne\v{c}as~\cite{Necas-Giaquinta}. Koshelev goes on by proving in \cite[Cor.~2.3.1]{Koshelev} the bounds
\begin{align*}
		X'(u_n,v_n)  &\geq \left( 1 - \frac{\alpha (\alpha - (d-2))}{2 (d-1)} - \varepsilon \right) \Vert u_n \Vert_{\H_{-\alpha, x_0} (\frac{1}{4} B)}^2  - C(\alpha, d, \varepsilon) \Vert \nabla u_n \Vert_{\L^2(\frac{1}{4} B)}^2,\\
		Y_{-\alpha}'(v_n) &\leq \left( 1 + \frac{(d-2) \alpha}{d-1} + \varepsilon \right) \Vert u_n \Vert_{\H_{-\alpha, x_0}(\frac{1}{4}B)}^2 + C(\alpha,d, \varepsilon) \Vert \nabla u_n \Vert_{\L^2(\frac{1}{4}B)}^2.
\end{align*}
Since $v_n$ is a test function for the equation for $u_n$ in $\frac{1}{4}B$, we have
\begin{align*}
	X'(u_n,v_n)
	=  \int_{\frac{1}{4}B} (\1_{(\C^{N})^d} - t A_n) \nabla u_n \cdot \nabla v_n \, \d x
\end{align*}
and the Cauchy--Schwarz inequality along with the bound $\d(A_n) \leq \d(A)$ in Lemma~\ref{Strongly elliptic systems: Lemma: Approximation of A} (i) yields
\begin{align*}
	|X'(u_n,v_n)| \leq \d(A) \|u_n \|_{\H_{-\alpha, x_0}(\frac{1}{4}B)} Y'_{-\alpha}(v_n)^{\frac{1}{2}}.
\end{align*}
Combining the previous three estimates and recalling the definition of  $c(\alpha, d, \varepsilon)$ leads to
\begin{align}
	\label{eq: OPR whole space: not absorbed, with F}
\begin{split}
	\Vert u_n \Vert_{\H_{-\alpha, x_0}(\frac{1}{4}B)}^2
	&\leq \d(A) \Vert u_n \Vert_{\H_{-\alpha, x_0}(\frac{1}{4}B)} \Big[ (c(\alpha,d, \varepsilon) \Vert u_n \Vert_{\H_{-\alpha, x_0}(\frac{1}{4}B)}
	\\&\quad+ C(\alpha,d, \varepsilon) \Vert \nabla u_n \Vert_{\L^2(\frac{1}{4}B)} \Big] + C(\alpha,d,\varepsilon) \|\nabla u_n \|_{\L^2(\frac{1}{4}B)}^2,
\end{split}
\end{align}
where $C(\alpha, d, \varepsilon)$ varies from line to line. The smoothing of the coefficients guarantees that the first summand on the right-hand side is finite and this is the very reason why we have to include this argument. By Young's inequality, it follows that
\begin{equation}
   \Vert u_n \Vert_{\H_{-\alpha, x_0}(\frac{1}{4}B)}^2
   \leq (c(\alpha,d, \varepsilon) + \varepsilon) \, \d(A) \Vert u_n \Vert_{\H_{-\alpha, x_0}(\frac{1}{4}B)}^2 + C(\alpha,d,\varepsilon) \|\nabla u_n \|_{\L^2(\frac{1}{4}B)}^2.
\end{equation}
Since $\d(A) < \delta(d)$, we can fix $\varepsilon$ small and $\alpha$ close to $d-2$ depending only on dimension and $\d(A)$ such that the first term on the right can be absorbed. This is the key point and the result is
\begin{equation}
    \Vert u_n \Vert_{\H_{-\alpha, x_0}(\frac{1}{4}B)}
   \les_{d, \d(A)} \| \nabla u_n \|_{\L^2(\frac{1}{4}B)}.
\end{equation}

After taking the supremum over all $x_0 \in \frac{1}{4} B$ we arrive at
\begin{equation} \label{eq: OPR whole space: already absorbed}
	\Vert u_n \Vert_{\H_{-\alpha}(\frac{1}{4}B)}
	\les_{d, \d(A)} \| \nabla u_n \|_{\L^2(\frac{1}{4}B)}.
\end{equation}
Then, Koshelev proves in  \cite[Thm.\@ 2.1.1]{Koshelev} that the left-hand side controls the H\"older seminorm of order $\mu = \nicefrac{(\alpha - d+2)}{2}$ on $\frac{1}{4}B$. Applying Caccioppoli's inequality on the right-hand side eventually leads to
\begin{equation*}
    [u_n]^{(\mu)}_{\frac{1}{4}B} \coloneqq \sup_{x,y \in \frac{1}{4} B, x \neq y} \frac{|u_n(x) - u_n(y)|}{|x-y|^{\mu}} \les_{d, \d(A)} \Vert u_n \Vert_{\L^2(B)}.
\end{equation*}
Finally, we invoke Lemma~\ref{Strongly elliptic systems: Lemma: Approximation of A} (ii) in order to deduce
\begin{equation*}
   [u]^{(\mu)}_{\frac{1}{4}B} \leq \limsup_{n \to \infty} \, [u_n]^{(\mu)}_{\frac{1}{4}B} \les_{d, \d(A)} \Vert u \Vert_{\L^2(B)}.
\end{equation*}
This estimate holds for any $L$-harmonic function on the unit ball. Since we have
\begin{equation*}
    \d(A) = \d(A^*) \quad \& \quad \d(A) = \d(A(x_0 + r \, \cdot))
\end{equation*}
for each $x_0 \in \R^d$ and $r > 0$, a scaling argument shows that the outcome of revisiting Koshelev's results is the following proposition.

\begin{proposition} \label{OPR whole space: Proposition: H(mu)}
Suppose $\d(A) < \delta(d)$. There are $\mu \in (0,1]$ and $C > 0$, both depending only on $d$ and $\d(A)$, such that we have for all balls $B = B(x,r) \sub \R^d$ and every $L$- or $L^*$-harmonic $u$ in $B$ that
\begin{equation*}
     r^{\mu} [u]_{\frac{1}{4} B}^{(\mu)} \leq C r^{- \frac{d}{2}} \Vert u \Vert_{\L^2(B)}.
\end{equation*}
\end{proposition}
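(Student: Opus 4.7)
The claim is essentially a packaging of the unit-ball estimate derived in the discussion above, together with the invariances $\d(A) = \d(A(x_0 + r\,\cdot)) = \d(A^*)$ quoted in the text. The plan is to pull back the statement on $B(x_0,r)$ to the unit ball by an affine change of variables, and to handle $L^*$-harmonic functions by rerunning the Koshelev chain with $A^*$ in place of $A$.

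Concretely, for $B = B(x_0, r)$ and an $L$-harmonic $u$, I would introduce $\tilde A(y) := A(x_0 + ry)$ and $\tilde u(y) := u(x_0 + ry)$. A routine change of variables shows that $\tilde u \in \W^{1,2}(B(0,1))^N$ is weakly $\tilde L$-harmonic on $B(0,1)$ for $\tilde L := -\Div(\tilde A \nabla \cdot)$, and $\d(\tilde A) = \d(A) < \delta(d)$. The argument culminating in \eqref{eq: OPR whole space: already absorbed} (followed by Koshelev's Morrey embedding and Caccioppoli's inequality) therefore yields
\begin{equation*}
[\tilde u]^{(\mu)}_{\frac{1}{4} B(0,1)} \leq C \Vert \tilde u \Vert_{\L^2(B(0,1))}
\end{equation*}
for some $\mu \in (0,1]$ and $C > 0$ depending only on $d$ and $\d(A)$. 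The elementary scaling identities
\begin{equation*}
[\tilde u]^{(\mu)}_{\frac{1}{4} B(0,1)} = r^{\mu}\, [u]^{(\mu)}_{\frac{1}{4} B} \qquad \text{and} \qquad \Vert \tilde u \Vert_{\L^2(B(0,1))} = r^{-\frac{d}{2}} \Vert u \Vert_{\L^2(B)}
\end{equation*}
then convert this into the asserted inequality.

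For an $L^*$-harmonic $u$ I would re-run the entire Koshelev chain with $A^*$ in place of $A$. Nothing in that derivation uses more than strong ellipticity and the bound $\d(\cdot) < \delta(d)$, and since $\d(A^*) = \d(A)$ (take pointwise adjoints inside the essential supremum defining $\d$), the exponent $\mu$ and the constant $C$ come out unchanged; the scaling step of the previous paragraph then applies verbatim.

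The heavy analytic lifting, namely the absorption step leading to \eqref{eq: OPR whole space: already absorbed} together with the Morrey embedding and Caccioppoli estimate, has already been carried out above, so I do not anticipate any substantive obstacle beyond keeping track of the scaling exponents for the Hölder seminorm and the $\L^2$ norm.
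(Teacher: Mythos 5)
Your proposal is correct and follows exactly the paper's route: the proposition is obtained from the unit-ball estimate established in the preceding discussion via the invariances $\d(A)=\d(A(x_0+r\,\cdot))=\d(A^*)$ and the standard scaling identities for the H\"older seminorm and the $\L^2$-norm, which you have verified with the right exponents. Nothing is missing.
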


The quantitative H\"older estimate in Proposition~\ref{OPR whole space: Proposition: H(mu)} appeared much later in a different context. Namely, Auscher and Tchamitchian \cite{Auscher_Heat-Kernel, Auscher_Tchamitchian_Kato} called it property $(\H)$ and proved that it implies that $\e^{-tL}$ has a kernel with pointwise Gaussian bounds. If we combine their Theorem 10 in \cite[Chap.\@ 1, Sec.\@ 1.4.1]{Auscher_Tchamitchian_Kato} with Proposition~\ref{OPR whole space: Proposition: H(mu)} above, then we obtain the following result for Gaussian estimates. It can be seen as a perturbation result from the Laplacian.

\begin{theorem}
\label{OPR whole space: Theorem: H(mu) implies G(mu)}
Suppose $\d(A) < \delta(d)$. The kernel of $(\e^{-t L})_{t >0}$ is represented by a H\"older regular function $(K_t)_{t > 0}$, which admits pointwise Gaussian estimates: There are $c, a > 0$ and $\mu \in (0,1)$ such that
\begin{align*}
    |K_t(x,y)| & \leq c t^{- \frac{d}{2}} \e^{-a \frac{|x-y|^2}{t}}, \\
    |K_t(x,y)- K_t(x',y')| &\leq c t^{-\frac{d}{2} - \frac{\mu}{2}} ( |x-x'| + |y-y'| )^{\mu}
\end{align*}
for all $t>0$ and $x,x',y,y' \in \R^d$. The constants $c, a, \mu$ depend only on $d$, $\lambda(A)$ and $\Lambda(A)$.
\end{theorem}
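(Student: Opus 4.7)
The plan is to combine Proposition~\ref{OPR whole space: Proposition: H(mu)} with the structural theorem of Auscher--Tchamitchian in \cite[Chap.~1, Sec.~1.4.1, Thm.~10]{Auscher_Tchamitchian_Kato}, which states that whenever $L$ and $L^*$ enjoy a quantitative interior H\"older estimate of the form given in Proposition~\ref{OPR whole space: Proposition: H(mu)} (their property $(\H)$), the semigroup $(\e^{-tL})_{t > 0}$ is represented by a kernel with pointwise Gaussian bounds and a H\"older modulus of continuity in both spatial variables. Thus the proof reduces to checking the hypothesis, after which the two listed estimates for $K_t$ are the direct output of their theorem.

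First I would verify property $(\H)$ simultaneously for $L$ and $L^*$. By Lemma~\ref{Strongly elliptic systems: Lemma: q(A) vs. d(A)} (and the trivial identity $\d(A) = \d(A^*)$ that follows from $\|M\|_{\infty} = \|M^*\|_{\infty}$ pointwise), the hypothesis $\d(A) < \delta(d)$ is symmetric in $L$ and $L^*$, so Proposition~\ref{OPR whole space: Proposition: H(mu)} produces a single exponent $\mu \in (0,1]$ and a single constant $C$, both depending only on $d$ and $\d(A)$, that govern the H\"older estimate on $\frac{1}{4}B$ for every $L$- or $L^*$-harmonic function $u$ on an arbitrary ball $B \subset \R^d$. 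This is exactly the formulation of property $(\H)$ in \cite{Auscher_Tchamitchian_Kato}.

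Next I would quote \cite[Chap.~1, Sec.~1.4.1, Thm.~10]{Auscher_Tchamitchian_Kato} to obtain the H\"older continuous representative $(K_t)_{t>0}$ of the kernel and its two pointwise bounds. The Gaussian decay constants $c, a$ and the H\"older exponent $\mu$ produced by that theorem depend only on $d$ and on the constants appearing in property $(\H)$, hence ultimately only on $d$ and $\d(A)$. Finally, since Lemma~\ref{Strongly elliptic systems: Lemma: q(A) vs. d(A)} bounds $\d(A)$ quantitatively in terms of $\lambda(A)$ and $\Lambda(A)$, the dependence of $c, a, \mu$ reduces to $d$, $\lambda(A)$, $\Lambda(A)$, as claimed.

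I expect the main (and essentially only) subtlety to be the bookkeeping that verifies Proposition~\ref{OPR whole space: Proposition: H(mu)} truly matches the quantitative formulation of property $(\H)$ used in \cite{Auscher_Tchamitchian_Kato}: one must make sure the scaling in $r$, the $\L^2$-average on $B$, and the dyadic reduction from $B$ to $\frac{1}{4}B$ agree with their statement, and that the argument is applied to both $L$ and $L^*$ so that the kernel H\"older estimate can be obtained simultaneously in the $x$ and $y$ variables. Beyond that, the proof is a direct citation, which is why the result is stated as a combination of Proposition~\ref{OPR whole space: Proposition: H(mu)} and the existing semigroup machinery.
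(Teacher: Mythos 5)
Your proposal is correct and follows essentially the same route as the paper: the authors' entire proof consists of combining Proposition~\ref{OPR whole space: Proposition: H(mu)} (which already covers both $L$- and $L^*$-harmonic functions, using $\d(A)=\d(A^*)$) with Theorem~10 of Auscher--Tchamitchian, exactly as you describe. The only caveat is your final remark on constant-dependence: Lemma~\ref{Strongly elliptic systems: Lemma: q(A) vs. d(A)} only sandwiches $\d(A)$ between functions of the ellipticity ratio rather than determining it, but this bookkeeping matches the level of precision in the paper's own statement.
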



From Young's inequality for convolutions, we obtain:

\begin{corollary}
\label{OPR whole space: Corollary: H(mu) implies G(mu)}
If $\d(A) < \delta(d)$, then $p_+(L) = \infty$.
\end{corollary}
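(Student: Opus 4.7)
The plan is to deduce $\L^p$-boundedness directly from the Gaussian upper bound in Theorem~\ref{OPR whole space: Theorem: H(mu) implies G(mu)} via a Schur-type argument. Since the heat semigroup is already a contraction on $(\L^2)^N$, it suffices to show uniform $\L^\infty$-boundedness and then interpolate.

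First, I would fix the constants $c, a > 0$ from Theorem~\ref{OPR whole space: Theorem: H(mu) implies G(mu)} and observe that for every $t > 0$ and every $x \in \R^d$,
\begin{equation*}
\int_{\R^d} |K_t(x,y)| \, \d y \leq c \int_{\R^d} t^{-\frac{d}{2}} \e^{-a \frac{|x-y|^2}{t}} \, \d y = c \left( \frac{\pi}{a} \right)^{\frac{d}{2}} =: M,
\end{equation*}
by the change of variables $z = (x-y)/\sqrt{t}$. The analogous estimate for $\int_{\R^d} |K_t(x,y)| \, \d x$ holds by symmetry of the bound in $x,y$. Hence, for $f \in (\L^\infty)^N$,
\begin{equation*}
|(\e^{-tL} f)(x)| \leq \int_{\R^d} |K_t(x,y)| \, |f(y)| \, \d y \leq M \|f\|_{\infty},
\end{equation*}
uniformly in $t > 0$, showing that $\e^{-tL}$ is bounded on $(\L^\infty)^N$ with bound independent of $t$.

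Next I would invoke complex interpolation (Riesz--Thorin) between the uniform $\L^2$-contraction property and the uniform $\L^\infty$-bound just obtained, yielding uniform boundedness of $\e^{-tL}$ on $(\L^p)^N$ for every $p \in [2,\infty]$. In particular, every $p \in (2,\infty)$ lies in $\cJ(L)$, so $p_+(L) = \infty$.

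There is no real obstacle here; the only minor point worth noting is that the definition of $\cJ(L)$ is restricted to $p \in (1,\infty)$, so $p_+(L) = \infty$ should be interpreted as meaning that $\cJ(L) \supseteq (1,\infty)$, which follows from what was proved above together with duality (applying the same reasoning to $L^*$, whose distance function satisfies $\d(A^*) = \d(A) < \delta(d)$ as noted in Section~\ref{Section: OPR whole space}).
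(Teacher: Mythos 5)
Your argument is correct and is essentially the paper's: the paper simply invokes Young's inequality for convolutions applied to the Gaussian majorant $c t^{-\nicefrac{d}{2}}\e^{-a|x-y|^2/t}$, whose $\L^1$-norm is independent of $t$, which is exactly the content of your Schur-type computation (your extra detour through $\L^\infty$-boundedness plus Riesz--Thorin and duality lands in the same place, since the Schur bound already gives all $p\in[1,\infty]$ directly).
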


We shall see next that the ``radius'' $r=\delta(d)$ is optimal for the conclusion in Corollary~\ref{OPR whole space: Corollary: H(mu) implies G(mu)} and hence also for the one in Theorem~\ref{OPR whole space: Theorem: H(mu) implies G(mu)}. Again this is implicit in Koshelev's work and relies on a counterexample due to De Giorgi.

Let $c > 0$ and $D \geq \nicefrac{(c^2 +1)}{(d-2)c}$. Define for $x \in \R^d \setminus \{ 0 \}$ the elliptic system with coefficients
\begin{equation*}
    (A_{\mathrm{DG}}(x))_{i,j}^{\alpha, \beta} \coloneqq \left( c \delta_{ij} + D \frac{x_i x_j}{|x|^2} \right) \left( c \delta_{\alpha \beta} + D \frac{x_\alpha x_\beta}{|x|^2} \right) \qquad (i,j, \alpha, \beta = 1, \up, d).
\end{equation*}
Then $u(x) \coloneqq \nicefrac{x}{|x|^{b}}$ with
\begin{equation*}
    b = \frac{d}{2} - \left( \frac{d^2}{4} - \frac{d(d-1) c D + (d-1) D^2}{1 + (c + D)^2}  \right)^{\frac{1}{2}} \in [1, \nicefrac{d}{2})
\end{equation*}
solves the elliptic system $- \Div (A_{\mathrm{DG}} \nabla u) =0$ in the weak sense in $B(0,1)$, see  \cite[Sec.\@ 2.5]{Koshelev} or \cite{DeGiorgi_counterexample}. Note that $b =1$ if and only if $D = \nicefrac{(c^2 +1)}{(d-2) c}$. Koshelev continues in \cite[Sec.\@ 2.5]{Koshelev} by showing that for this choice of $D$ he can pick $c = c(d) > 0$ such that $\d(A_{\mathrm{DG}}) = \delta(d)$. Since $\d(A_{\mathrm{DG}})$ depends continuously on $D$ and $c$, we can pick these parameters for any given $\varepsilon > 0$ in such a way that
\begin{align}
\label{OPR whole space: Parameters for counterexample}
b > 1 \quad \& \quad \d(A_{\mathrm{DG}}) < \delta(d) +\varepsilon.
\end{align}
Now we use a localization argument from \cite[Chap.\@ 1, Sec.\@ 1.3]{Auscher_Tchamitchian_Kato} to prove:

\begin{proposition}
\label{OPR whole space: Proposition: p+ optimal}
For any $\varepsilon > 0$ there is $A$ such that $\d(A) < \delta(d) +\varepsilon$ and $p_+(L) < \infty$.
\end{proposition}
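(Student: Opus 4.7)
The plan is to use the De Giorgi construction recalled above, tuning its parameters so that the singular solution becomes \emph{unbounded} at the origin, and then to derive a contradiction from the assumption $p_+(L_{\mathrm{DG}}) = \infty$ by a cutoff argument. First, I fix $c = c(d)$ and $D = D(d)$ satisfying \eqref{OPR whole space: Parameters for counterexample}, so that De Giorgi's explicit exponent $b$ is strictly larger than $1$ while $\d(A_{\mathrm{DG}}) < \delta(d) + \varepsilon$. Since $A_{\mathrm{DG}}$ depends only on $x/|x|$, it is bounded and measurable on $\R^d \setminus \{0\}$, so extending it arbitrarily at the origin yields a uniformly strongly elliptic system on $\R^d$ without affecting $\d(A_{\mathrm{DG}})$. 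The function $u(x) = x/|x|^b$ belongs to $\W^{1,2}(B)^d$ on $B \coloneqq B(0,1)$ because $b < d/2$, and it is $L_{\mathrm{DG}}$-harmonic in $B$; crucially, $b > 1$ forces $|u(x)| = |x|^{1-b} \to \infty$ as $x \to 0$, so $u \notin \L^{\infty}(B)^d$.

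Now suppose, for contradiction, that $p_+(L_{\mathrm{DG}}) = \infty$. Following the localization of \cite[Chap.~1, Sec.~1.3]{Auscher_Tchamitchian_Kato}, pick $\chi \in \smooth[B]$ with $\chi \equiv 1$ on $\tfrac{1}{2}B$, and set $v \coloneqq \chi u \in \W^{1,2}(\R^d)^d$. Using $L_{\mathrm{DG}} u = 0$ in $B$, a direct computation gives
\begin{equation*}
    L_{\mathrm{DG}} v = - \Div(A_{\mathrm{DG}} (u \otimes \nabla \chi)) - (A_{\mathrm{DG}} \nabla u) \cdot \nabla \chi \eqqcolon -\Div \f + g,
\end{equation*}
where both $\f$ and $g$ are supported in the annulus $\{\tfrac{1}{2} \leq |x| \leq 1\}$. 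Away from the origin $A_{\mathrm{DG}}$ and $u$ are smooth, so $\f, g$ are smooth and compactly supported, and in particular lie in $\L^p(\R^d)$ for every $p < \infty$. The plan is to use the uniform $\L^p$-boundedness of the semigroup to convert this into $v \in \L^{\infty}(\R^d)^d$: since $v \equiv u$ on $\tfrac{1}{2}B$, this will contradict the blow-up of $u$ at $0$ and force $p_+(L_{\mathrm{DG}}) < \infty$.

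The main obstacle is the bootstrap from $p_+(L_{\mathrm{DG}}) = \infty$ to an $\L^{\infty}$-bound on $v$. One can only use the resolvent $(\mu + L_{\mathrm{DG}})^{-1}$ and its gradient counterpart, applied to the representation $v = (\mu + L_{\mathrm{DG}})^{-1}(\mu v - \Div \f + g)$, together with a Sobolev-type embedding once $p > d$; iterating with $\L^p$-norms of $\f, g$ then yields the required $\L^{\infty}$-bound on $v$. Care is needed because $p_+(L) = \infty$ is strictly weaker than pointwise Gaussian bounds on the kernel (which cannot hold here, in view of the very solution $u$), so the argument must avoid invoking Theorem~\ref{OPR whole space: Theorem: H(mu) implies G(mu)} and proceed purely through $\L^p$-mapping properties of $(\mu + L_{\mathrm{DG}})^{-1}$ and $\nabla (\mu + L_{\mathrm{DG}})^{-1}$, which is precisely the content of the cited localization procedure. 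Once this step is in place, the rest is bookkeeping on the De Giorgi parameters to enforce \eqref{OPR whole space: Parameters for counterexample}.
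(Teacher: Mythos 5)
Your setup coincides with the paper's: the same De Giorgi coefficients tuned via \eqref{OPR whole space: Parameters for counterexample}, the same cutoff $v=\chi u$, and the observation that $L_{\mathrm{DG}}v$ is smooth and compactly supported. The gap is in the final step, where you aim to deduce $v\in\L^\infty$ from $p_+(L_{\mathrm{DG}})=\infty$. That implication is not available and is in fact false here: the mechanism you propose (a Sobolev embedding $\W^{1,p}\hookrightarrow\L^\infty$ for $p>d$ fed by ``the gradient counterpart'' of the resolvent) requires $\nabla(\mu+L_{\mathrm{DG}})^{-1}$ to be bounded into $\L^p$ for some $p>d$, i.e.\ essentially $q_+(L_{\mathrm{DG}})>d$. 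This is governed by $q_+$, not $p_+$, and it fails for your own example since $\nabla u\sim |x|^{-b}$ is not locally $p$-integrable for $p\geq \nicefrac{d}{b}$. More structurally, $p_+(L)=\infty$ only gives $\L^p$--$\L^q$ bounds for \emph{finite} exponents, so iterating the representation $v=(\mu+L)^{-1}(\mu v-\Div\f+g)$ can only ever produce $v\in\L^q$ for all finite $q$, never $v\in\L^\infty$ --- and indeed $v\notin\L^\infty$.

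The repair is short and is exactly what the paper does: you do not need $\L^\infty$. From $p_+(L_{\mathrm{DG}})=\infty$ and $L_{\mathrm{DG}}v\in(\rC_{\cc}^\infty)^N$, the mapping properties of $L_{\mathrm{DG}}^{-1}$ (quoted in the paper as \cite[Prop.~5.3]{Auscher_Riesz_Trafo}, or obtainable from the iteration you set up, stopped at finite exponents) give $v\in(\L^q)^N$ for every $q\in(2^*,\infty)$. But $|v(x)|=|x|^{1-b}$ near the origin with $b>1$, so $|v|^q$ is not integrable there once $q\geq\nicefrac{d}{(b-1)}$. This contradiction at a single large \emph{finite} $q$ completes the proof; the condition $b>1$ is used to defeat $\L^q$-integrability for large finite $q$, not merely boundedness.
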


\begin{proof}
We pick $A = A_{\mathrm{DG}}$ as in \eqref{OPR whole space: Parameters for counterexample} and set $u(x) \coloneqq \nicefrac{x}{|x|^{b}}$. Let $\phi \in \rC^{\infty}_{\cc}$ be such that $\1_{B(0, \nicefrac{1}{2})} \leq \phi \leq \1_{B(0,1)}$. Then $v \coloneqq \phi u \in \D(L)$ and using $Lu =0$, we deduce
\begin{equation*}
	Lv = -\Div ( A (\nabla \phi \otimes u)) - \Div (\phi A \nabla u)= -\Div ( A (\nabla \phi \otimes u)) - \nabla \phi \odot (A \nabla u).
\end{equation*}
Our notation should be interpreted as what comes out from the product rule. The only thing that matters is that $\nabla \phi$ vanishes near the origin and hence $Lv \in  (\rC^{\infty}_{\cc})^N$. If we had $p_+(L) = \infty$, then according to \cite[Prop.\@ 5.3]{Auscher_Riesz_Trafo} we would get $v \in  (\L^q)^N$ for every $q \in (2^*,\infty)$. However, $|v(x)|= |x|^{1- b}$ in a neighborhood of $0$ can not belong to $\L^q$ for $q \geq \nicefrac{d}{(b -1)}$.
\end{proof}

\section{The interpolation argument} \label{Section: A priori estimates: Interpolation approach}

We come to the proof of our main result, Theorem~\ref{Introduction: Theorem: Application of the abstract interpolation result}, for the case $\d(A) \geq \delta(d)$. We will use basic properties of semigroups and vector-valued holomorphic functions. For further background we refer to \cite{VVLT}.

We begin with a Stein-type interpolation principle tailored to our needs (taking care of implicit constants in particular). We write
\begin{align*}
	\S_{\delta} &\coloneqq \{ z \in \C : - \delta < \re(z) < 1 + \delta \} \quad \& \quad \S \coloneqq \S_0.
\end{align*}
For a matrix $A=A_z$ depending on a parameter $z$ we let $a_z$ be the sesquilinear form corresponding to $L_z \coloneqq - \Div (A_z \nabla \cdot )$.

\begin{proposition} \label{Introduction: Proposition: A priori bound via interpolation}
Let $\delta > 0$. Suppose that $\{ A_{z} \}_{z \in \S_{\delta}} \sub \L^{\infty}(\R^d; \LL((\C^N)^d))$ are uniformly strongly elliptic matrices such that:
\begin{enumerate}
\item There are $0 < \lambda \leq \Lambda$ with $\lambda(A_z) \geq \lambda$ and $\Lambda(A_z) \leq \Lambda$ for all $z \in \overline{\S}$.

\item We have $\sup_{t \in \R} \d(A_{\mathrm{i} t}) < \delta(d)$.

\item For all $u,v \in (\W^{1,2})^N$ the map $z \mapsto a_z(u,v)$ is holomorphic in $\S_{\delta}$.
\end{enumerate}
Then for $\theta \in [0,1]$ we have $p_+(L_{\theta}) \geq \nicefrac{2^*}{\theta}$.
\end{proposition}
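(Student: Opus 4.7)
The plan is to apply Stein-type complex interpolation to the operator family $T_z \coloneqq \e^{-t L_z}$ on the closed strip $\overline{\S} = \{z \in \C : 0 \leq \re z \leq 1\}$, with $t > 0$ fixed. On the line $\re z = 0$ I will obtain a bound $\|T_{\i s}\|_{\L^\infty \to \L^\infty} \leq M_0$ from the Gaussian estimates of Theorem~\ref{OPR whole space: Theorem: H(mu) implies G(mu)}, and on the line $\re z = 1$ the bound $\|T_{1+\i s}\|_{\L^{2^*} \to \L^{2^*}} \leq M_1$ from the classical fact $p_+(L) > 2^*$ for uniformly strongly elliptic systems. Since $1/p_\theta = (1-\theta)/\infty + \theta/2^* = \theta/2^*$, the three-lines lemma will then produce $\|T_\theta\|_{\L^{p_\theta} \to \L^{p_\theta}} \leq M_0^{1-\theta} M_1^\theta$ with $p_\theta = 2^*/\theta$, uniformly in $t > 0$, which is exactly the desired $p_+(L_\theta) \geq 2^*/\theta$.

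The crucial point is uniformity of the boundary bounds in the imaginary variable $s \in \R$. On $\re z = 0$, assumptions (i) and (ii) combine: uniform ellipticity together with $\sup_s \d(A_{\i s}) < \delta(d)$ makes the constants in Theorem~\ref{OPR whole space: Theorem: H(mu) implies G(mu)} independent of $s$, and integrating $|K_t^{(\i s)}(x,\cdot)|$ against $1$ gives $M_0$ uniformly in both $s$ and $t$. On $\re z = 1$, (i) alone suffices, as the standard proof of $p_+(L) > 2^*$ for elliptic systems produces an $\L^{2^*}\to\L^{2^*}$ bound depending only on $d, \lambda, \Lambda$.

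For the interpolation itself, I would fix simple functions $f, g$ with $\|f\|_{p_\theta} = \|g\|_{p_\theta'} = 1$ and form the standard complex interpolation substitutes $f_z, g_z$ (normalized so that $\|f_{\i s}\|_\infty = \|f_{1+\i s}\|_{2^*} = 1$ and analogously for $g$). The scalar function $F(z) \coloneqq \langle T_z f_z, g_z \rangle$ is holomorphic on $\S_\delta$: (iii) turns $a_z$ into a holomorphic family of forms, uniform m-accretivity from (i) makes $z \mapsto (L_z + \mu)^{-1}$ a locally bounded, $\LL((\L^2)^N)$-valued holomorphic function for $\re \mu$ large, and the holomorphic functional calculus transfers this to $T_z$. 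The three-lines lemma applied to $F$ then yields $|F(\theta)| \leq M_0^{1-\theta} M_1^\theta$, and the conclusion follows by duality and density.

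The main obstacle is checking admissibility of $F$ for the three-lines lemma, i.e.\ that $|F(z)|$ stays bounded on the whole of $\overline{\S}$. This needs a separate a priori bound independent of $\im z$: the pointwise estimate $|f_z(x)| \leq \max(1, |f(x)|^{1/\theta})$ depends only on $\re z$, so $\|f_z\|_{(\L^2)^N} \leq C(f)$ uniformly in $\im z$, and similarly for $g_z$. Combined with the $(\L^2)^N$-contractivity of $T_z$ inherited from m-accretivity of $L_z$, this bounds $|F|$ by $C(f)C(g)$ throughout $\overline{\S}$, after which the three-lines lemma applies without further fuss.
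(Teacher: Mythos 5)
Your scaffolding is largely sound and close to the paper's: holomorphy of $z \mapsto \e^{-tL_z}$ from assumption (iii) (the paper cites Vogt--Voigt for exactly this), admissibility for the three-lines lemma via the $\L^2$-pairing and contractivity, and the exponent bookkeeping. The left boundary is also fine: under (i) and (ii), Theorem~\ref{OPR whole space: Theorem: H(mu) implies G(mu)} gives Gaussian kernel bounds with constants uniform in $s$, hence a uniform $\L^\infty\to\L^\infty$ bound. The gap is on the right boundary $\re z = 1$. You assert a uniform bound $\Vert \e^{-tL_{1+\i s}}\Vert_{\L^{2^*}\to\L^{2^*}} \leq M_1$ with $M_1 = M_1(d,\lambda,\Lambda)$, attributing it to the ``classical fact $p_+(L)>2^*$''. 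This is not available in the form you need. The quantitative input for a general elliptic system is the off-diagonal smoothing estimate $\Vert \e^{-tL}f\Vert_{2^*}\lesssim t^{-1/2}\Vert f\Vert_2$ (Sobolev embedding plus analyticity of the semigroup), and extrapolation from it (\cite[Prop.~3.2]{Auscher_Riesz_Trafo}) yields uniform $\L^q$-bounds only for $q$ in the \emph{open} interval $(2,2^*)$ with constants controlled by $d,\lambda,\Lambda$. The strict inequality $p_+(L)>2^*$ --- which is what places $q=2^*$ itself inside $\cJ(L)$ --- rests on a non-quantifiable self-improvement, so its constant cannot be taken uniform over the family $\{A_{1+\i s}\}_{s\in\R}$; indeed, the paper's introduction stresses that improvements beyond $2^*$ are not quantified by ellipticity alone. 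Without a uniform $M_1$, Stein's theorem does not apply.

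Two repairs are possible. (a) Keep your diagonal setup but use $q_\varepsilon=2^*-\varepsilon$ on the right boundary, where a uniform bound with constants depending on $d,\lambda,\Lambda,\varepsilon$ does hold by the extrapolation principle; interpolation then gives $p_+(L_\theta)\geq q_\varepsilon/\theta$ for every $\varepsilon>0$, hence $p_+(L_\theta)\geq 2^*/\theta$ after letting $\varepsilon\to 0$. (b) The paper's route: interpolate the \emph{off-diagonal} boundary estimates $\Vert\e^{-tL_{\i s}}f\Vert_\infty\lesssim t^{-d/4}\Vert f\Vert_2$ (Young's inequality applied to the Gaussian kernel) and $\Vert\e^{-tL_{1+\i s}}f\Vert_{2^*}\lesssim t^{-1/2}\Vert f\Vert_2$ (Sobolev embedding together with $\Vert L_z\e^{-tL_z}f\Vert_2\lesssim t^{-1}\Vert f\Vert_2$ from Cauchy's integral formula), conclude that $\e^{-tL_\theta}$ is $\L^2$--$\L^{2^*/\theta}$-bounded with the correct time scaling, and then invoke \cite[Prop.~3.2]{Auscher_Riesz_Trafo} to obtain uniform $\L^q$-bounds for all $q\in(2,2^*/\theta)$, which is exactly $p_+(L_\theta)\geq 2^*/\theta$.
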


\begin{proof}
Fix $t > 0$ and $\theta \in [0,1]$. Let us define $\Phi(z) \coloneqq \e^{- t L_{z}}$ for $z \in \S_{\delta}$. As an  $\LL((\L^2)^N)$-valued map, $\Phi$ is bounded by $1$, holomorphic on $\S_\delta$ and in particular continuous on $\overline{\S}$. This follows from (iii), see \cite{Hol_dependence}. Now, we estimate $\Phi$ on the boundary of $\S$. Let $f \in (\L^2)^N$.

\textbf{(A) Estimate on the left boundary.} Let $z \in \overline{\S}$ with $\re (z) = 0$. Due to (ii) and Theorem \ref{OPR whole space: Theorem: H(mu) implies G(mu)} we have Gaussian estimates for the kernel $(K_{t,z})_{t > 0}$ of $(\e^{-t L_z})_{t >0}$ at our disposal. By (i) implicit constants depend only on $\lambda, \Lambda$ and $d$. Young's inequality for convolutions yields
\begin{equation*}
    \Vert \e^{- t L_z} f \Vert_{\infty} \les_{d, \lambda, \Lambda} t^{-\frac{d}{4}} \Vert f \Vert_2.
\end{equation*}
\textbf{(B) Estimate on the right boundary.} Let $z \in \overline{\S}$ with $\re (z) = 1$. By holomorphy of the semigroup it follows that $\e^{- t L_z } f \in (\W^{1,2})^N$. Thus, by a Sobolev embedding, (i) and H\"older's inequality, we get
\begin{align*}
\Vert \e^{- t L_z} f \Vert_{2^*}^2
&\les_d \Vert \nabla \e^{- t L_z} f \Vert_{2}^2 \\
&\les_{\lambda} |a_z (\e^{- t L_z} f, \e^{- t L_z} f)| \\
&= |(L_z \e^{- t L_z} f \, | \, \e^{- t L_z} f)_2| \\
&\leq \Vert L_z \e^{- t L_z} f \Vert_{2} \Vert \e^{- t L_z} f \Vert_{2}.
\end{align*}
The semigroup is contractive on $(\L^2)^N$ in the sector $\{ z \in \C: |\arg z| < \varphi \}$, where $\tan \varphi = \nicefrac{\lambda}{\Lambda}$. In particular, $\Vert \e^{- t L_z} f \Vert_{2} \leq \Vert f \Vert_{2}$ and by Cauchy's formula for the complex derivative
\begin{equation*}
    \Vert L_z \e^{-t L_z} f \Vert_{2} = \Big \Vert \frac{1}{2 \pi \mathrm{i}} \int_{\partial B(t, r)} \frac{\e^{- w L_z} f}{(w -t)^2} \, \d w \Big \Vert_{2} \les_{\lambda, \Lambda} t^{-1} \Vert f \Vert_2
\end{equation*}
with $r = t \sin (\varphi)$.
Altogether, this gives
\begin{equation*}
    \Vert \e^{-t L_z} f \Vert_{2^*} \les_{\lambda, \Lambda,d} t^{-\frac{1}{2}} \Vert f \Vert_2.
\end{equation*}

Combining (A) and (B), Stein's Interpolation Theorem \cite[Thm.\@ 1]{Stein_Interpolation} implies that
\begin{equation*}
\Vert \e^{-t L_{\theta}} f \Vert_{\frac{2^*}{\theta}} \les_{d, \lambda, \Lambda, \theta} t^{\frac{d}{\nicefrac{2 \cdot 2^*}{ \theta}} - \frac{d}{4}} \Vert f \Vert_{2}.
\end{equation*}
In the language of $\L^p - \L^q$-estimates, this means that $(\e^{-t L_{\theta}})_{t > 0}$ is $\L^2 - \L^{\nicefrac{2^*}{\theta}}$-bounded. A general principle for these estimates (\cite[Prop.\@ 3.2]{Auscher_Riesz_Trafo}) implies $\Vert \e^{-t L_{\theta}} f \Vert_q \les_{d, \lambda, \Lambda, q} \Vert f\Vert_q$ for all $q \in (2,\nicefrac{2^*}{\theta})$ as claimed.
\end{proof}

\begin{proof}[\rm\bf{Proof of Theorem \ref{Introduction: Theorem: Application of the abstract interpolation result} (i).}]
Let $A$ be elliptic such that $\d(A) \in [\delta(d),1)$. Fix $\varepsilon \in (0, 1-\d(A))$ and $t^* > 0$ such that $\Vert \1_{(\C^N)^d} - t^* A \Vert_{\infty} = \d(A)$. We abbreviate $B\coloneqq \1_{(\C^N)^d} - t^* A$, which means $A = (t^*)^{-1} (\1_{(\C^N)^d} - B)$. As sketched in the introduction, we perturb $B$ by multiplication with complex numbers from a suitable annulus. However, it will be convenient to parametrize these numbers via an analytic function $F$ defined on $\S$.

Let $0 < r < 1 < R$ to be chosen. We embed $A$ into the analytic family
	\begin{equation*}
		A_{z} \coloneqq (t^*)^{-1}(\1_{(\C^N)^d} - F(z) B), \quad \text{where} \quad F(z) \coloneqq r^{1-z}R^z = r \e^{z \ln(\nicefrac{R}{r})}.
	\end{equation*}
	Note that $F$ is entire, bounded by $r^{-\delta}R^{1+\delta} = \delta(d)^{-\delta} R$ in any strip $\S_\delta$, and maps $\overline{\S}$ onto $\{ z \in \C : r \leq |z| \leq R \}$. At this point we choose
	\begin{equation*}
		r \coloneqq \frac{\delta(d)}{\d(A)+ \varepsilon} \quad \& \quad R \coloneqq \frac{1}{\d(A) + \varepsilon}.
	\end{equation*}
    Let us show that for $z$ in a strip $\S_\delta$ with sufficiently small $\delta>0$ we can define the interpolating operators $L_{z} \coloneqq - \Div (A_{z} \nabla \, \cdot)$, where $A_{z}$ is still elliptic by Lemma~\ref{Strongly elliptic systems: Lemma: q(A) vs. d(A)}. Indeed, for $\delta > 0$ sufficiently small our choice of $R$ delivers
\begin{equation*}
	\sup_{z \in \S_{\delta}} \Vert A_z \Vert_{\infty} \leq (t^*)^{-1}(1 + \delta(d)^{-\delta} R \, \d(A)) \quad \& \quad \sup_{z \in \S_{\delta}} \d(A_z) \leq \delta(d)^{-\delta} R \, \d(A) < 1.
\end{equation*}
This also proves (i) in Proposition~\ref{Introduction: Proposition: A priori bound via interpolation}. Part (iii) follows immediately and our choice of $r $ yields
\begin{equation*}
\sup_{t \in \R} \d( A_{\mathrm{i} t} ) \leq \sup_{t \in \R} \Vert F(\mathrm{i} t) B \Vert_{\infty} = r \, \d (A) < \delta(d),
\end{equation*}
which is (ii). Now, pick $\theta \in (0,1)$ such that $1 = r^{1- \theta} R^{\theta}$. Then $L = L_{\theta}$ and Proposition~\ref{Introduction: Proposition: A priori bound via interpolation} implies that $p_+(L) \geq \nicefrac{2^*}{\theta}$. Finally, we notice that
\begin{equation*}
\theta =1 - \frac{ \ln(R)}{\ln(\nicefrac{R}{r})} \longrightarrow 1 - \frac{\ln (\d(A))}{\ln(\delta(d))} \quad \text{as} \; \varepsilon \searrow 0.  \qedhere
\end{equation*}
\end{proof}

\begin{remark} \label{A priori bound for p_+(L) via interpolation: Remark: Advantages}
In Proposition~\ref{Introduction: Proposition: A priori bound via interpolation} we assume that $z \mapsto a_z(u,v)$ is holomorphic in a larger strip for convenience to get continuity of $z \mapsto \e^{-t L_z}$ up to $\overline{\S}$. If this holds true for any other reason, it is enough to suppose that $z \mapsto a_z (u,v)$ is holomorphic in $\S$.
\end{remark}

\begin{remark} \label{A priori bound for p_+(L) via interpolation: Remark: Advantages}
The proof of Theorem~\ref{Introduction: Theorem: Application of the abstract interpolation result} reveals that the same results hold for divergence form operators with form domain $V$ on general open sets $O \sub \R^d$, provided that Theorem~\ref{OPR whole space: Theorem: H(mu) implies G(mu)} holds true with implicit constants depending only on geometry, ellipticity and dimension, and that we have additionally
    \begin{equation*}
        \Vert u \Vert_{\L^{2^*}(O)} \les \Vert \nabla u \Vert_{\L^2(O)} \qquad (u \in V),
    \end{equation*}
which was used in (B) above.
\end{remark}

\section{Extension to bounded $\rC^1$-domains}
\label{Section: OPR for bounded domains}

Let us extend Theorem~\ref{Introduction: Theorem: Application of the abstract interpolation result} and Theorem~\ref{OPR whole space: Theorem: H(mu) implies G(mu)} to bounded $\rC^1$-domains with Dirichlet boundary conditions. The divergence form operator $L = -\Div(A \nabla \, \cdot)$ with uniformly strongly elliptic $A \in \L^{\infty}(\Omega; \LL((\C^N)^d))$ is now realized in $\L^2(\Omega)^N$ as the m-accretive operator associated to the form
\begin{equation*}
    a(u,v) \coloneqq \int_{\Omega} A \nabla u \cdot \overline{\nabla v} \, \d x \qquad (u,v \in \W^{1,2}_0(\Omega)^N).
\end{equation*}
We fix our geometric setup.

\begin{assumption} \label{Assumption: Geometry for OPR}
Throughout this section $\Omega \sub \R^d$, $d \geq 3$, is a bounded domain with $\rC^1$-boundary. This means that there is some $M > 0$ such that for each $x_0 \in \partial \Omega$ there is an open neighborhood $U$ of $x_0$ and a $\rC^1$-diffeomorphism $\phi \colon U \to B(0,1)$,
$\phi(x) = (x', \psi(x') - x_d)$
such that $\phi(U \cap \Omega) = B(0,1) \cap \R^d_+$ and $\Vert D \psi \Vert_{\infty} \leq M$.
\end{assumption}

We can choose $M$ arbitrarily small by choosing the neighborhoods small enough. This is exactly the reason, why we assume that the boundary is $\rC^1$ and not just Lipschitz.

\begin{theorem}
In the setting above suppose that $\d(A) < \delta(d)$. Then the kernel of $(\e^{-t L})_{t >0}$ is represented by a measurable function $(K_t)_{t > 0}$ for which there are $c, a > 0$ and $\mu \in (0,1)$ such that
\begin{align*}
	|K_t(x,y)| & \leq c t^{- \frac{d}{2}} \e^{-a \frac{|x-y|^2}{t}}, \\
	|K_t(x,y)- K_t(x',y')| &\leq c t^{-\frac{d}{2} - \frac{\mu}{2}} ( |x-x'| + |y-y'| )^{\mu}
\end{align*}
for all $t>0$ and $x,x',y,y' \in \Omega$. The constants $c$ and $a$ depend only on $d, \lambda(A), \Lambda(A)$ and geometry.
\end{theorem}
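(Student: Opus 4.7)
The plan is to reduce the claim to the interior case of Theorem~\ref{OPR whole space: Theorem: H(mu) implies G(mu)} via boundary flattening and odd reflection. The key observation is that in Assumption~\ref{Assumption: Geometry for OPR} the constant $M$ may be chosen arbitrarily small, which renders the pulled-back coefficients an arbitrarily small $\L^{\infty}$-perturbation of a reflected copy of $A$; combined with the hypothesis $\d(A) < \delta(d)$ this allows Proposition~\ref{OPR whole space: Proposition: H(mu)} to be applied on both sides of the flattened boundary.

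First I would establish a quantitative boundary Hölder estimate for $u \in \W^{1,2}_0(\Omega)^N$ that is $L$-harmonic on $B \cap \Omega$ with vanishing trace on $B \cap \partial \Omega$, for a ball $B = B(x_0,r)$ centered at $x_0 \in \partial \Omega$, of the same form as in Proposition~\ref{OPR whole space: Proposition: H(mu)}, with $\mu$ and the implicit constant depending only on $d$, $\lambda(A)$, $\Lambda(A)$ and the geometry. Fix a boundary chart $\phi\colon U \to B(0,1)$ from Assumption~\ref{Assumption: Geometry for OPR} and write $D\phi = R + E$ with $R = \diag(1,\dots,1,-1)$ and $\|E\|_{\infty} \leq M$; note that $|\det D\phi| = 1$. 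The pulled-back function $\tilde u \coloneqq u \circ \phi^{-1}$ is a weak solution to $-\Div(\tilde A \nabla \tilde u) = 0$ on $B(0,1) \cap \R^d_+$ with vanishing trace on $\{x_d = 0\}$, where
\begin{equation*}
\tilde A \coloneqq \bigl((D\phi) \otimes \1_{\C^N}\bigr)\bigl(A \circ \phi^{-1}\bigr)\bigl((D\phi)^T \otimes \1_{\C^N}\bigr).
\end{equation*}
Since $R \otimes \1_{\C^N}$ is an orthogonal isometry of $(\C^N)^d$ squaring to $\1_{(\C^N)^d}$, conjugation by it preserves $\d$, so expanding $D\phi = R + E$ yields $\d(\tilde A) \leq \d(A) + C_1 \Lambda(A) M$.

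Next I extend $\tilde u$ to the unit ball by odd reflection, $\bar u(x',x_d) \coloneqq -\tilde u(x',-x_d)$ for $x_d < 0$, and correspondingly extend $\tilde A$ by $\bar A(x',x_d) \coloneqq (R \otimes \1_{\C^N})\tilde A(x',-x_d)(R \otimes \1_{\C^N})$. A direct calculation tracking the sign change of $\partial_d$ and the compensating sign pattern in the $d$-th spatial row and column of $\bar A$ shows that $\bar u$ is a weak solution of $-\Div(\bar A \nabla \bar u) = 0$ on $B(0,1)$, and $\d(\bar A) = \d(\tilde A)$ by the orthogonality of $R \otimes \1_{\C^N}$. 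Choosing $M$ small enough in terms of $\delta(d) - \d(A)$ and $\Lambda(A)$ (permitted by Assumption~\ref{Assumption: Geometry for OPR}) gives $\d(\bar A) < \delta(d)$, so Proposition~\ref{OPR whole space: Proposition: H(mu)} applies to $\bar u$ on a sub-ball and transfers back through $\phi$ to the desired boundary Hölder estimate for $u$.

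A finite covering of $\overline{\Omega}$ by interior balls and by such boundary charts then combines the interior estimate of Proposition~\ref{OPR whole space: Proposition: H(mu)} with the boundary version above to yield the uniform quantitative Hölder property $(\H)$ on $\Omega$. The Gaussian bounds on the Dirichlet heat kernel then follow from the Davies perturbation and Gaffney $\L^2$ off-diagonal decay arguments of \cite[Chap.~1, Sec.~1.4.1]{Auscher_Tchamitchian_Kato}, which carry over verbatim to the Dirichlet realization on a bounded domain. The main obstacle is the bookkeeping for the reflection step: one must verify that the sign pattern imposed on $\bar A$ is exactly what is required both for $\bar u$ to remain a weak solution across $\{x_d = 0\}$ and for the identity $\d(\bar A) = \d(\tilde A)$ to hold, so that the smallness of $M$ can be exploited to stay in the Koshelev regime.
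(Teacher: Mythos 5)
Your proposal is correct in substance but takes a genuinely different route at the key boundary step. The paper also reduces matters to a quantitative H\"older estimate at the boundary after flattening with the chart $\phi$ (and likewise exploits that $M$ can be taken small, via the analogue of your bound $\d(\tilde A)\leq \d(A)+O(M)$, cf.\ \eqref{Eq: Sebastian}), but it then stays on the half-ball: it applies Koshelev's Morrey estimate \cite[(2.4.13)]{Koshelev} for solutions on $B_+$ vanishing on the flat part, runs the same absorption as in Section~\ref{Section: OPR whole space} (again after the smoothing of Lemma~\ref{Strongly elliptic systems: Lemma: Approximation of A} to make the a priori quantities finite), and finally invokes \cite[Thm.~12]{Auscher_Tchamitchian_Domains} by verifying property (D). You instead reflect oddly across $\{x_d=0\}$, extending the coefficients by $\bar A(x',x_d)=Q\tilde A(x',-x_d)Q$ with $Q=R\otimes\1_{\C^N}$, and feed the result into the interior Proposition~\ref{OPR whole space: Proposition: H(mu)}; this is legitimate -- the flux $\bar A\nabla\bar u$ reflects with the correct sign pattern so that the weak formulation holds across the interface (the test-function identity reduces to testing $\tilde u$ against the odd part of $\varphi$, which lies in $\W^{1,2}_0(B_+)$), and $\d(\bar A)=\d(\tilde A)$ because the \emph{same} $t$ realizes the minimum on both half-balls by unitary invariance. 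What your route buys is independence from Koshelev's boundary estimates, reusing only the interior result; what it costs is exactly the reflection bookkeeping you flag, which you should write out. Two minor corrections: the increment in $\d(\tilde A)$ is controlled by $t^*\Lambda(A)M\lesssim M$ rather than $\Lambda(A)M$ (harmless, since you let $M$ depend on the data anyway); and the final passage from the H\"older property to Gaussian kernel bounds on a bounded domain is not ``verbatim'' from \cite[Chap.~1, Sec.~1.4.1]{Auscher_Tchamitchian_Kato} -- you should cite the domain version \cite[Thm.~12]{Auscher_Tchamitchian_Domains}, which is set up precisely to accept the boundary H\"older estimate you prove.
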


\begin{proof}
The proof is very similar to the one of Theorem \ref{OPR whole space: Theorem: H(mu) implies G(mu)}. We abbreviate
\begin{equation*}
    \Omega(x,r) \coloneqq \Omega \cap B(x,r).
\end{equation*}
We use again elliptic estimates for $L$-harmonic functions due to Koshelev, this time also in the half space after localization and transformation. The kernel estimates will then follow from \cite[Thm.\@ 12]{Auscher_Tchamitchian_Domains} provided we can check what they call property (D)\footnote{Note carefully that the bounded $\rC^1$-domain $\Omega$ falls into class $(\mathrm{II})$ in \cite{Auscher_Tchamitchian_Domains}. In this case, and since we consider Dirichlet boundary conditions, the properties $(\mathrm{D})$ and $(\mathrm{D_{loc}})$ in \cite{Auscher_Tchamitchian_Domains} coincide by definition.} for $L$ and $L^*$. By the easy argument in \cite[p.~37]{Auscher_Tchamitchian_Kato} it suffices to show a property similar to Proposition~\ref{OPR whole space: Proposition: H(mu)} and formulated as follows:

There are $C>0$ and $\gamma \in (0,1)$ such that for all $x_0 \in \overline{\Omega}$, $r \leq \rho_0$ and $u \in \W^{1,2}_0(\Omega)^N$ with $L u = 0$ or $L^* u =0$ in $\Omega(x_0,r)$ it holds
\begin{equation}
\label{eq: Goal C1 case}
    r^{\mu} [u]^{(\mu)}_{\Omega(x_0, \gamma r)} \leq C r^{-\frac{d}{2}} \Vert u \Vert_{\L^2(\Omega(x_0,r))}.
\end{equation}
Here $\rho_0 \leq 1$ is chosen small as explained in \cite[p.\@ 20]{Auscher_Tchamitchian_Domains}.

In view of $\d(A) = \d(A^*)$, we stick to the case of $L$-harmonic functions. When $x_0 \in \Omega$ and $B(x_0,r) \subseteq \Omega$, this estimate has already been obtained in Section~\ref{Section: OPR whole space} with $\gamma = \nicefrac{1}{4}$. By a case distinction (whether or not $B(x_0, \nicefrac{r}{2})$ intersects $\partial \Omega$) it suffices to treat in addition the case $x_0 \in \partial \Omega$.

So, let $r \leq \rho_0$ and $u \in \W^{1,2}_0(\Omega)^N$ with $L u =0$ in $\Omega(x_0,r)$.
Let $t > 0$ and pick $\rho \simeq_M r$ small enough such that $\phi^{-1}(B(0, \rho)) \sub B(x_0,r)$. Put $B_+ \coloneqq \R^d_+ \cap B$, where $B$ is again the unit ball, and write $u_{\phi, \rho} \coloneqq (u \circ \phi^{-1})(\rho \, \cdot )$. A change of coordinates implies that $u_{\phi, \rho} \in \W^{1,2}(B_+)^N$ is a weak solution of
\begin{equation*}
- \Div\Big((D \phi)_{\phi, \rho} A_{\phi, \rho} (D \phi)_{\phi, \rho}^{\top} \nabla (u_{\phi, \rho})\Big) = 0 \quad \text{in} \; B_+
\end{equation*}
that vanishes on $\partial \R^d_+ \cap \partial B_+$.

Next, we use a smoothing procedure as in Section~\ref{Section: OPR whole space}. We let $A_n$ and $u_n$ be defined as in Lemma~\ref{Strongly elliptic systems: Lemma: Approximation of A} with $A$ replaced by $(D \phi)_{\phi, \rho} A_{\phi, \rho} (D \phi)_{\phi, \rho}^{\top}$ (see also Remark~\ref{Strongly elliptic systems: Remark: A not defined on R^d}). Then the same lemma assures that $u_n \to u_{\phi, \rho}$ in $\L^2(B_+)^N$ and a.e.\@ along a subsequence. In addition, we have $u_n \in \rC^{\infty}(\overline{\frac{3}{4} B_+})^N$ by elliptic regularity \cite[Sec.\@ 6.3.1, Thm.\@ 5]{Evans_PDE}. As in Section~\ref{Section: OPR whole space}, we write $- \Div (A_n \nabla u_n) =0$ as
\begin{equation*}
    - \Delta u_n = - \Div (F_n) \quad \text{with} \quad F_n \coloneqq (\1_{(\C^N)^d} - t A_n) \nabla u_n
\end{equation*}
and $t > 0$ is chosen such that $\Vert \1_{(\C^N)^d} - t A_n \Vert_{\infty} = \d(A_n)$. Note that
\begin{equation}
\label{Eq: Sebastian}
    \Vert \1_{(\C^N)^d} - t A_n \Vert_{\infty}
    \leq \d((D \phi)_{\phi, \rho} A_{\phi, \rho} (D \phi)_{\phi, \rho}^{\top})
    \leq M^2 + (1+M)^2 \d(A),
\end{equation}
where the first inequality is due to Lemma~\ref{Strongly elliptic systems: Lemma: Approximation of A} (i) and the second one follows by definition of $\phi$.
Koshelev proves in \cite[(2.4.13)]{Koshelev} for each $x_0 \in \frac{1}{4} B_+$ that
\begin{align} \label{eq: OPR domain: not absorbed, f not replaced}
     \Vert u_n \Vert_{\H_{-\alpha,x_0}(\frac{1}{4} B_+)} &\leq c(\alpha,d, \varepsilon) \Vert F_n |x-x_0|^{-\frac{\alpha}{2}} \Vert_{\L^2(\frac{1}{4} B_+)}
  \\& \quad + C(\alpha, \varepsilon,d) \Big[\Vert \nabla u_n \Vert_{\L^2(\frac{1}{4} B_+)} + \Vert F_n \Vert_{\L^2(\frac{1}{4} B_+)}\Big], \notag
\end{align}
provided that the right-hand side is finite, and $u_n$ and $f_n$ are sufficiently smooth, which is the case thanks to our smoothing procedure. Here, $c(\alpha,d, \varepsilon)$ is as in \eqref{eq: OPR whole space: Constant c(alpha,d)}, where $\alpha > d -2$ and $\varepsilon > 0$ are chosen such that $c(\alpha,d, \varepsilon)$ is positive and finite. By definition of $F_n$ and \eqref{Eq: Sebastian}, we derive the estimate
\begin{align}
  \Vert u_n \Vert_{\H_{-\alpha,x_0}(\frac{1}{4} B_+)} &\leq \Big((1 + M)^2 \d(A) + M^2\Big) \Big( c(\alpha, d, \varepsilon) + \varepsilon \Big) \Vert u_n \Vert_{\H_{-\alpha, x_0}(\frac{1}{4} B_+)}
  \\&\quad + C(\alpha, \varepsilon,d, M) \Vert \nabla u_n \Vert_{\L^2(\frac{1}{4} B_+)}.
\end{align}
As $\d(A) < \delta(d)$, we can pick $\varepsilon > 0$, $\alpha > d-2$ and $M > 0$ depending only on $\d(A)$ and $d$ such that first term on the right can be absorbed in order to obtain
\begin{equation} \label{eq: OPR domain: already absorbed}
    \Vert u_n \Vert_{\H_{-\alpha}(\frac{1}{4} B_+)} \les_{d, \d(A)} \Vert \nabla u_n \Vert_{\L^2(\frac{1}{4} B_+)}.
\end{equation}
As in Section~\ref{Section: OPR whole space} we
deduce
\begin{equation*}
    [ u_{\phi, \rho} ]_{\frac{1}{4} B_+}^{(\mu)} \les_{d, \d(A)} \Vert u_{\phi, \rho} \Vert_{\L^2(B_+)},
\end{equation*}
where $\mu \coloneqq \nicefrac{(\alpha - d +2)}{2}$.

Transforming back gives \eqref{eq: Goal C1 case} for some $\gamma = \gamma(M) \in (0,1)$.
\end{proof}

At this point we are in the same situation as on $\R^d$ and we can derive the following statement.

\begin{corollary}
In the setup of this section the following assertions hold true.
\begin{enumerate}
    \item If $\d(A) > \delta(d)$, then
    \begin{equation*}
    p_+(L) \geq \frac{2^*}{1 - \frac{\ln(\d(A))}{\ln(\delta(d))}}.
    \end{equation*}

    \item If $\d(A) \leq \delta(d)$, then $p_+(L) = \infty$.

    \item Part (ii) is sharp in the sense that for all bounded $\rC^1$-domains $\Omega \sub \R^d$ and for each $\varepsilon >0$ there is some $A_{\varepsilon}$ with $\d(A_{\varepsilon}) \leq \delta(d) + \varepsilon$ and $p_+(L_{\varepsilon}) < \infty$.
\end{enumerate}
\end{corollary}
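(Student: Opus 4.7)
The three parts follow the roadmap of Sections~\ref{Section: OPR whole space} and \ref{Section: A priori estimates: Interpolation approach}, with the $\R^d$ kernel estimates of Theorem~\ref{OPR whole space: Theorem: H(mu) implies G(mu)} replaced by the kernel estimates on $\Omega$ just proven. For Part~(ii), the pointwise Gaussian bound on $K_t$ delivers $\L^1 \to \L^\infty$-boundedness of $\e^{-tL}$ with the expected parabolic scaling; combining this with $\L^2$-contractivity and interpolation yields uniform $\L^p$-boundedness for every $p \in [1,\infty]$, hence $p_+(L) = \infty$. This is literally the argument of Corollary~\ref{OPR whole space: Corollary: H(mu) implies G(mu)}.

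For Part~(i), I would replay the proof of Theorem~\ref{Introduction: Theorem: Application of the abstract interpolation result}~(i) verbatim in the domain setting. Writing $A = (t^*)^{-1}(\1_{(\C^N)^d} - B)$ with $B \coloneqq \1_{(\C^N)^d} - t^* A$, I embed $A$ into the analytic family $A_z \coloneqq (t^*)^{-1}(\1_{(\C^N)^d} - F(z) B)$ with $F(z) = r^{1-z} R^z$ and the same optimal choices $r = \delta(d)/(\d(A)+\varepsilon)$, $R = 1/(\d(A)+\varepsilon)$. Conditions~(i) and~(iii) of Proposition~\ref{Introduction: Proposition: A priori bound via interpolation} are unchanged, and the bound $\sup_{t \in \R} \d(A_{\mathrm{i} t}) \leq r \, \d(A) < \delta(d)$, combined with the $\Omega$-kernel estimates, supplies the $\L^2 \to \L^\infty$ estimate needed on the line $\re z = 0$. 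On the line $\re z = 1$ the $\L^2 \to \L^{2^*}$ estimate comes from the Sobolev embedding $\W^{1,2}_0(\Omega) \hookrightarrow \L^{2^*}(\Omega)$ (which holds by extension by zero, so Remark~\ref{A priori bound for p_+(L) via interpolation: Remark: Advantages} applies), the form bound and the holomorphy of $(\e^{-zL})$. Stein interpolation then yields $p_+(L_\theta) \geq 2^*/\theta$, and letting $\varepsilon \searrow 0$ gives the lower bound claimed for $p_+(L)$.

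For Part~(iii), I would localize the De~Giorgi counterexample of Proposition~\ref{OPR whole space: Proposition: p+ optimal} inside $\Omega$. Pick $x_0 \in \Omega$ and $\rho > 0$ with $B(x_0, \rho) \subset \Omega$, select the De~Giorgi parameters $c$ and $D$ so that $\d(A_{\mathrm{DG}}) < \delta(d) + \varepsilon$ while keeping the singularity exponent $b > 1$, translate so that the singularity sits at $x_0$, and extend the resulting matrix by a scalar multiple of $\1_{(\C^N)^d}$ on $\Omega \setminus B(x_0, \rho)$ without increasing $\d(\cdot)$, as in Remark~\ref{Strongly elliptic systems: Remark: A not defined on R^d}. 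Multiply the singular solution $u(x) = (x-x_0)/|x-x_0|^b$ by a cutoff $\phi \in \rC_\cc^\infty(B(x_0, \rho))$ equal to $1$ near $x_0$; then $v \coloneqq \phi u$ lies in $\D(L_\varepsilon)$ (the Dirichlet condition holds trivially since $v$ is compactly supported inside $\Omega$), and $L_\varepsilon v$ is smooth with compact support in the annulus where $\nabla \phi \neq 0$. If $p_+(L_\varepsilon) = \infty$ held, the $\Omega$-analog of \cite[Prop.~5.3]{Auscher_Riesz_Trafo}, itself a direct consequence of the Gaussian kernel bounds on $\Omega$ from Part~(ii), would force $v \in \L^q(\Omega)^N$ for all large $q$, contradicting $|v(x)| \sim |x-x_0|^{1-b}$ near $x_0$ as soon as $q \geq d/(b-1)$. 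The main technical point is this last $\L^q$-regularity statement on $\Omega$, but it reduces cleanly to the kernel bounds just established, so no genuinely new work is required.
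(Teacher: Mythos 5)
Parts (i) and (ii) of your argument are correct and coincide with the paper's: the interpolation scheme of Proposition~\ref{Introduction: Proposition: A priori bound via interpolation} transfers to $\Omega$ once one has the kernel estimates of the preceding theorem and the Sobolev embedding $\W^{1,2}_0(\Omega)\hookrightarrow \L^{2^*}(\Omega)^N$, exactly as anticipated in Remark~\ref{A priori bound for p_+(L) via interpolation: Remark: Advantages}.

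Part (iii), however, has a genuine gap at the decisive step. You justify the implication ``$p_+(L_\varepsilon)=\infty$ implies $v\in \L^q(\Omega)^N$ for large $q$'' by appealing to an $\Omega$-analogue of \cite[Prop.~5.3]{Auscher_Riesz_Trafo}, which you claim is ``a direct consequence of the Gaussian kernel bounds on $\Omega$ from Part~(ii)''. But those kernel bounds are only available when $\d(A)\leq\delta(d)$, and the counterexample coefficients necessarily satisfy $\d(A_\varepsilon)>\delta(d)$ (otherwise Part~(ii) would already give $p_+(L_\varepsilon)=\infty$ and there would be nothing to contradict). So the kernel bounds cannot be invoked for $L_\varepsilon$; the required regularity statement must be extracted from the hypothesis $p_+(L_\varepsilon)=\infty$ alone. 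The paper does this by an argument that genuinely uses the boundedness of $\Omega$: by Poincar\'e's inequality, $L-\theta_2$ is still m-accretive for some $\theta_2>0$, so $\Vert\e^{-tL}f\Vert_2\leq \e^{-\theta_2 t}\Vert f\Vert_2$; interpolating this exponential decay with the assumed uniform $\L^p$-bound for some $p>q$ yields $\Vert\e^{-tL}f\Vert_q\les \e^{-\theta_q t}\Vert f\Vert_q$, and then the representation $L^{-1}f=\int_0^\infty \e^{-tL}f\,\d t$ shows that $L^{-1}$ preserves $(\L^q\cap\L^2)^N$, contradicting $v=L^{-1}(Lv)\notin(\L^q)^N$. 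On $\R^d$ this route is unavailable because $L$ has no spectral gap, which is why \cite[Prop.~5.3]{Auscher_Riesz_Trafo} is a nontrivial result there; on a bounded domain the substitute is this simpler Laplace-transform argument, and it is the piece missing from your proof.
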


\begin{proof}
The estimates for $p_+(L)$ follow as before, see also Remark~\ref{A priori bound for p_+(L) via interpolation: Remark: Advantages}. As for the sharpness of the radius $\d(A) = \delta(d)$ we can, after translation, assume $0 \in \Omega$. We take the same coefficients $A=A_\mathrm{DG}$ as in the proof of Proposition~\ref{OPR whole space: Proposition: p+ optimal} and localize $u$ to a ball contained in $\Omega$. As before, this produces some $v \in \D(L)$ with $v \notin (\L^q)^N$ for $q$ large but $L v \in (\rC_{\cc}^{\infty})^N$. Arriving at a contradiction with $p_+(L) = \infty$ requires a different (and in fact simpler) argument compared to the case $\Omega = \R^d$.

By ellipticity and Poincaré's inequality, there is some $\theta_2 >0$ such that $L-\theta_2$ is still m-accretive. Hence, $L$ is invertible in $(\L^2)^N$ and the semigroup enjoys the exponential bound $\|\e^{-tL}f\|_2 \leq \e^{-\theta_2 t}\|f\|_2$ for all $t>0$ and $f \in (\L^2)^N$. By interpolation with the uniform bound on $(\L^p)^N$ for some $p>q$, we get $\|\e^{-tL}f\|_q \les \e^{-\theta_q t}\|f\|_q$ with some $\theta_q > 0$. But then the formula
\begin{equation*}
    L^{-1}f = \int_0^\infty \e^{-tL}  f \, \d t,
\end{equation*}
valid in $(\L^2 \cap \L^q)^N$ by the exponential estimate, implies that $L^{-1}$ maps $(\L^q \cap \L^2)^N$ into itself, in contradiction with the properties of $v$.
\end{proof}

\section{Dimensionless improvements}
\label{Section: dimensionless}

Here, we prove Theorem~\ref{Introduction: Theorem: q+ improvement}. For $1<p<\infty$ we denote by $(\Wdot^{1,p})^N$ the space of all $\C^N$-valued tempered distributions modulo $\C^N$ for which the distributional gradient belongs to $(\L^p)^{dN}$. We endow this space with the norm $\Vert \nabla \cdot \Vert_p$ and denote by $(\Wdot^{-1,p})^N$ the anti-dual space of $(\Wdot^{1, p'})^N$. We define
\begin{equation}
\label{eq: Dimensionless improvements: divAgrad}
    -\Div A \nabla \colon (\Wdot^{1,p})^N \to (\Wdot^{-1,p})^N, \quad \langle -\Div A \nabla u \, | \, v \rangle \coloneqq \int_{\R^d} A \nabla u \cdot \overline{\nabla v} \, \d x.
\end{equation}
By H\"older's inequality, this is a bounded map.

We denote by $c(p)$ the operator norm of the Riesz transform $R \coloneqq \nabla ( - \Delta )^{- \nicefrac{1}{2}}$ (defined via a Fourier multiplication operator with symbol $- \mathrm{i} \frac{\xi}{|\xi|} \otimes \1_{\C^N}$) from $(\L^p)^N$ to $(\L^p)^{dN}$. By Plancherel's theorem we have $c(2)=1$.

\begin{proposition}
\label{Dimensionless improvements for the critical numbers: Proposition: Memoirs extrapolation}
Let $p > 2$. Then $c(p) < \nicefrac{1}{\sqrt{\d(A)}}$ implies $q_+(L) \geq p$.
\end{proposition}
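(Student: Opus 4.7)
The proof proceeds by a Neumann-series inversion after a similarity transformation with $(-\Delta)^{1/2}$, followed by transfer to the semigroup via a Cauchy integral.

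First, I would normalize the matrix by writing $A = \tau(\1_{(\C^N)^d} - B)$ with $\|B\|_{\infty} = \d(A)$ and $\tau > 0$, exactly as in the proof of Theorem~\ref{Introduction: Theorem: Application of the abstract interpolation result}(i). Since $\cN(\tau L) = \cN(L)$ by a time rescaling, we may assume $\tau = 1$, so that $L = L_0 + \Div(B \nabla \, \cdot)$ where $L_0 \coloneqq -\Delta$ acts componentwise. The key algebraic identity, obtained by formal conjugation with $L_0^{1/2}$, is
\begin{equation*}
    L_0^{-\frac{1}{2}} L L_0^{-\frac{1}{2}} = I - R^* B R \quad \text{on } (\L^p)^N,
\end{equation*}
using $\nabla L_0^{-1/2} = R$ and $L_0^{-1/2}\Div = -R^*$ (the $L^2$-adjoint). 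Under the hypothesis $c(p)^2 \d(A) < 1$ one estimates
\begin{equation*}
    \|R^* B R\|_{(\L^p)^N \to (\L^p)^N} \leq \|R^*\|_{(\L^p)^{dN}\to(\L^p)^N}\,\|B\|_{\infty}\,\|R\|_{(\L^p)^N\to(\L^p)^{dN}} \leq c(p)^2\,\d(A) < 1,
\end{equation*}
where the bound $\|R^*\|_{(\L^p)^{dN}\to(\L^p)^N} \leq c(p)$ follows from Banach duality (which yields $\|R^*\|_{(\L^p)^{dN}\to(\L^p)^N} = c(p')$) together with the equality $c(p') = c(p)$ for the vector Riesz transform, a consequence of the pointwise skew-adjointness $R_j^* = -R_j$ of the individual components. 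A Neumann series then gives the invertibility of $I - R^* B R$ on $(\L^p)^N$ with norm controlled by $(1 - c(p)^2\d(A))^{-1}$.

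To convert this static invertibility at $\mu = 0$ into the semigroup bound $\sup_{t>0}\sqrt{t}\,\|\nabla e^{-tL}\|_{(\L^p)^N \to (\L^p)^{dN}} < \infty$ that characterises $p \in \cN(L)$, I would rerun the same perturbation argument for the shifted operator $\mu + L$ with $\mu$ in a sector $\Sigma_{\nu} \subset \C$ around $(0,\infty)$. The shifted Riesz transform $R_{\mu} \coloneqq \nabla(\mu + L_0)^{-1/2}$ is a Fourier multiplier with symbol $\mathrm{i}\xi/(\mu + |\xi|^2)^{1/2}$, and by scaling $|\mu| \mapsto 1$ together with subordination to the unperturbed Riesz transform it satisfies $\|R_{\mu}\|_{(\L^p)^N \to (\L^p)^{dN}} \lesssim c(p)$ uniformly in $\mu \in \Sigma_{\nu}$. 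The analogous identity
\begin{equation*}
    (\mu + L_0)^{-\frac{1}{2}}(\mu + L)(\mu + L_0)^{-\frac{1}{2}} = I - R_{\mu}^* B R_{\mu}
\end{equation*}
is then invertible by the same Neumann argument, which yields the uniform resolvent bound $\sqrt{|\mu|}\,\|\nabla(\mu + L)^{-1}\|_{(\L^p)^N \to (\L^p)^{dN}} \lesssim 1$. Substituting this into the Cauchy integral representation $\nabla e^{-tL} = \frac{1}{2\pi\mathrm{i}}\int_\Gamma e^{-t\mu}\nabla(\mu + L)^{-1}\,\d\mu$ along a suitable sectorial contour then yields $p \in \cN(L)$.

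The principal obstacle is the uniform-in-$\mu$ control of the shifted Riesz transform $R_{\mu}$: obtaining $\|R_{\mu}\|_{(\L^p)^N \to (\L^p)^{dN}} \lesssim c(p)$ uniformly in $\mu \in \Sigma_{\nu}$ (and not merely a qualitative boundedness) requires a careful Mihlin-type or subordination analysis preserving the sharp constant $c(p)$ up to a universal factor. A secondary subtlety is the identification $c(p) = c(p')$ underlying the bound on $\|R^*\|_{(\L^p)^{dN} \to (\L^p)^N}$, which is a nontrivial but well-known fact for vector Riesz transforms (e.g.\@ via the Iwaniec--Martin constant $\cot(\pi/(2p^*))$, which depends only on $p^* = \max(p,p')$).
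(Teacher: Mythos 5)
Your first half --- writing $A=(t^*)^{-1}(\1_{(\C^N)^d}-B)$ with $\|B\|_\infty=\d(A)$, conjugating with $(-\Delta)^{\nicefrac{1}{2}}$ to get $I-R^*BR$, and inverting by a Neumann series under $c(p)^2\d(A)<1$ --- is exactly the paper's argument, and your remark that the bound on $\|R^*\|_{(\L^p)^{dN}\to(\L^p)^N}$ really produces $c(p')$ rather than $c(p)$ is a fair point which the paper itself passes over silently. The two proofs diverge in how this static invertibility is converted into $q_+(L)\geq p$: the paper simply observes that $(-\Delta)^{\nicefrac{1}{2}}\colon(\Wdot^{1,p})^N\to(\L^p)^N$ is an isomorphism, concludes that $-\Div A\nabla\colon(\Wdot^{1,p})^N\to(\Wdot^{-1,p})^N$ is invertible compatibly with $p=2$, and then invokes the characterization of $q_+(L)$ by exactly this invertibility from \cite[Sec.~13.3]{Auscher-Egert}. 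You instead try to reprove that implication by hand through shifted resolvents and a Cauchy integral.

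That second half has a genuine gap, and it is precisely the point you flag as ``the principal obstacle.'' To make the contour integral $\nabla\e^{-tL}=\frac{1}{2\pi\i}\int_\Gamma \e^{t\lambda}\nabla(\lambda+L)^{-1}\,\d\lambda$ converge with the factor $t^{-\nicefrac{1}{2}}$, the contour must enter the left half-plane, so you need the resolvent bound for $\mu$ in a sector of half-angle strictly larger than $\nicefrac{\pi}{2}$ (up to $\pi-\omega$, $\omega$ the numerical-range angle of $L$). For such $\mu$ the multiplier $\i\xi(\mu+|\xi|^2)^{-\nicefrac{1}{2}}$ is only controlled by Mihlin-type estimates whose constants carry dimensional factors and blow up as $\arg\mu\to\pi$; there is no known bound of the form $\|R_\mu\|_{(\L^p)^N\to(\L^p)^{dN}}\leq c(p)$, nor even $\leq C\,c(p)$ with $C$ close to $1$, in that range (subordination arguments for real $\mu>0$ already lose a factor $2$). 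This loss is fatal here because the hypothesis $c(p)<\nicefrac{1}{\sqrt{\d(A)}}$ is used at its exact threshold: the Neumann series for $I-R_\mu^*BR_\mu$ requires $\|R_\mu^*\|\,\|R_\mu\|\,\d(A)<1$, so any multiplicative constant $C>1$ in the bound for $R_\mu$ destroys the conclusion for $p$ near the claimed endpoint. As written, the passage from invertibility at $\mu=0$ to $p\in\cN(L)$ is therefore not established; you would either need to close this quantitative uniformity (which I do not believe is possible along the sectorial contour), or do what the paper does and quote the extrapolation/characterization result for $q_+(L)$ in terms of the invertibility of $-\Div A\nabla$ on homogeneous Sobolev spaces.
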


\begin{proof}
By the characterization of $q_+(L)$ in \cite[Sec.\@ 13.3]{Auscher-Egert}, it suffices to prove that the map in \eqref{eq: Dimensionless improvements: divAgrad} is invertible and that the inverse is compatible with the one for $p=2$.

We borrow an idea from \cite[Lem.\@ 3.4]{Auscher_Riesz_Trafo}. Fix $t^* > 0$ such that $\d(A) = \Vert \1_{(\C^N)^d} - t^* A \Vert_{\infty}$ and put $B \coloneqq \1_{(\C^N)^d} - t^* A$. Then we can factorize
\begin{align*}
    - \Div A \nabla &= (t^*)^{-1} (-\Delta) + (t^*)^{-1} \Div B \nabla
    \\&= (t^*)^{-1} (- \Delta)^{\frac{1}{2}} \left(\1 + R^* B R \right) (- \Delta)^{\frac{1}{2}}.
\end{align*}
Since
\begin{equation*}
    \Vert R^* B R f \Vert_p \leq c(p)^2 \d(A) \Vert f \Vert_p
\end{equation*}
for $f \in (\L^p)^N \cap (\L^2)^N$, it suffices that $c(p) < \nicefrac{1}{\sqrt{\d(A)}}$
to invert
\begin{equation*}
    (\1 + R^* B R)^{-1} = \sum_{n = 0}^{\infty} (- R^* B R)^n,
\end{equation*}
in $(\L^p)^N$. The inverse is compatible since the same Neumann series converges also in $(\L^2)^N$ owing to $c(2)=1$. Next, since $(- \Delta)^{\nicefrac{1}{2}} \colon (\Wdot^{s,p})^N \to (\Wdot^{s-1,p})^N$ is an isomorphism for $s = 0,1$ and all $p \in (1, \infty)$, it follows that also $- \Div A \nabla \colon (\Wdot^{1,p})^N \to (\Wdot^{-1,p})^N$ is invertible with compatible inverse.
\end{proof}

The constant $c(p)$ has a long history and that $c(p)$ is controlled from above by a dimensionless constant goes back to Stein \cite{Stein_d_independence_of_RT_norm}. Its exact value remains unknown to date. The best known estimates can be used to determine an improvement for $q_+(L) - 2$ explicitly. Dragi\v{c}evi\'{c} and Volberg have shown in \cite[Cor.~ 0.2]{Riesz-Trafo_Linear-bound} that
\begin{equation} \label{eq: Dimensionless improvement: Upper bound for c(p): Vol-Drag}
    c(p) \leq 2 (p -1) \qquad (p \geq 2).
\end{equation}
(Note that their short argument applies word-by-word to $\C^N$-valued functions.) However, this does not give $c(2) =1$, suggesting that their bound can be improved by interpolation for $p > 2$ not too far away from $2$. In fact, this is the case we are most interested in and we include the proof of the following elementary lemma. A similar argument for the Ahlfors--Beurling transform is found in \cite{Ahlfors-Beurling-Transform}.

\begin{lemma}\label{Dimensionless improvements for the critical numbers: Lemma: Upper bound for c(p)}
If $\sigma \approx 5.69061$ is the unique real solution to
\begin{equation*}
    \ln (2 \sigma -2 ) = \frac{\sigma(\sigma-2)}{2(\sigma-1)},
\end{equation*}
and $2 \leq p \leq \sigma$, then
\begin{equation}  \label{eq: Dimensionless improvement: Upper bound for c(p)}
    c(p) \leq \left( \e^{\frac{\sigma^2}{\sigma -1}} \right)^{\frac{1}{2} - \frac{1}{p}} \leq 2(p-1).
\end{equation}
\end{lemma}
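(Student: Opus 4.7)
The plan is to interpolate between the trivial bound $c(2) = 1$ (Plancherel) and the Dragi\v{c}evi\'{c}--Volberg bound $c(\sigma) \leq 2(\sigma-1)$ from~\eqref{eq: Dimensionless improvement: Upper bound for c(p): Vol-Drag}. For $p \in [2, \sigma]$, writing $\frac{1}{p} = \frac{1-\theta}{2} + \frac{\theta}{\sigma}$ gives $\theta = \frac{1/2 - 1/p}{1/2 - 1/\sigma}$, and the vector-valued Riesz--Thorin theorem applied to the linear Fourier multiplier $R \colon (\L^p)^N \to (\L^p)^{dN}$ yields
\begin{equation*}
	c(p) \leq 1^{1-\theta}\bigl(2(\sigma-1)\bigr)^{\theta} = (2\sigma-2)^{\theta}.
\end{equation*}

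Next, I would rewrite $(2\sigma-2)^{\theta} = \exp(\theta \ln(2\sigma-2))$ and substitute the defining equation $\ln(2\sigma-2) = \frac{\sigma(\sigma-2)}{2(\sigma-1)}$. The factors of $\sigma-2$ in $\theta$ and in $\ln(2\sigma-2)$ cancel, leaving $\theta \ln(2\sigma-2) = \bigl(\frac{1}{2} - \frac{1}{p}\bigr)\frac{\sigma^2}{\sigma-1}$, which is precisely the first inequality in~\eqref{eq: Dimensionless improvement: Upper bound for c(p)}.

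For the second inequality I would argue by convexity. Set $u \coloneqq 1/p \in [1/\sigma, 1/2]$ and consider
\begin{equation*}
	g(u) \coloneqq \ln\bigl(2(1/u - 1)\bigr) = \ln 2 + \ln(1-u) - \ln u,
	\qquad
	L(u) \coloneqq \Bigl( \frac{1}{2} - u \Bigr) \frac{\sigma^2}{\sigma-1}.
\end{equation*}
A short computation gives $g''(u) = u^{-2} - (1-u)^{-2} > 0$ on $(0, 1/2)$, so $g$ is strictly convex there. The defining equation for $\sigma$ amounts to $L(1/\sigma) = g(1/\sigma)$, and differentiation gives $g'(1/\sigma) = -\frac{\sigma}{\sigma-1} - \sigma = -\frac{\sigma^2}{\sigma-1}$, which is exactly the slope of $L$. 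Hence $L$ is the tangent line to the convex graph of $g$ at $u = 1/\sigma$, so $L \leq g$ on all of $(0, 1/2)$; exponentiating and substituting back $u = 1/p$ yields the second inequality.

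The computations are elementary; the only noteworthy feature is that the defining equation for $\sigma$ simultaneously produces the clean form of the first bound and enforces the tangency that makes the second inequality work. Uniqueness of $\sigma>2$ is a by-product, since $h(s) \coloneqq \ln(2s-2) - \frac{s(s-2)}{2(s-1)}$ has derivative $h'(s) = -\frac{(s-2)^2}{2(s-1)^2} \leq 0$.
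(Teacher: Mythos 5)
Your proof is correct and follows essentially the same route as the paper: Riesz--Thorin interpolation between $c(2)=1$ and the Dragi\v{c}evi\'{c}--Volberg bound, with $\sigma$ singled out by its defining equation as the optimal interpolation endpoint. The only difference is cosmetic: where the paper obtains the second inequality by observing that $q=\sigma$ minimizes $q \mapsto \tfrac{2q}{q-2}\ln(2q-2)$ (so the choice $q=p$ cannot do better), you verify the equivalent fact directly via the tangent-line/convexity argument for $u \mapsto \ln\bigl(2(\nicefrac{1}{u}-1)\bigr)$ at $u = \nicefrac{1}{\sigma}$, which amounts to spelling out the ``straightforward calculation'' the paper leaves implicit.
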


\begin{proof}
Fix $q \geq p$. Riesz--Thorin interpolation and the fact that $c(2) =1$ yields
\begin{equation*}
    c(p) \leq c(q)^{\theta} \quad \text{with} \quad \frac{1}{p} = \frac{1- \theta }{2} + \frac{\theta}{q}.
\end{equation*}
We insert this value of $\theta$ and use \eqref{eq: Dimensionless improvement: Upper bound for c(p): Vol-Drag} to get
\begin{equation*}
    c(p) \leq \left( (2(q-1))^{\frac{2q}{q-2}} \right)^{\frac{1}{2} - \frac{1}{p}} = \left( \e^{\frac{2q}{q-2} \ln (2q -2)} \right)^{\frac{1}{2} - \frac{1}{p}}.
\end{equation*}
Since $\nicefrac{1}{2} - \nicefrac{1}{p} > 0$, we have to minimize the expression $\frac{2q}{q-2} \ln (2q -2)$ over $q \geq p$.  A straightforward calculation shows that the minimum over $q \geq 2$ is  attained in $q=\sigma$. Since $p \leq \sigma$, this must also be the global minimum over $q \geq p$ and the first estimate in \eqref{eq: Dimensionless improvement: Upper bound for c(p)} follows. The second estimate in  \eqref{eq: Dimensionless improvement: Upper bound for c(p)} follows simply because $q=p$ cannot give a better bound.
\end{proof}

\begin{proof}[\rm \bf{Proof of Theorem~\ref{Introduction: Theorem: q+ improvement}}]
Let
\begin{align*}
	\Phi \colon [2, \infty) \to [1, \infty), \quad \Phi(p) \coloneqq \begin{cases}
		\left( \e^{\frac{\sigma^2}{\sigma -1}} \right)^{\frac{1}{2} - \frac{1}{p}} \quad &(p \leq \sigma), \\ 2(p-1) \quad &(p > \sigma).
	\end{cases}
\end{align*}
Note that $\Phi$ is continuous, bijective and $c(p) \leq \Phi(p)$ for all $p \geq 2$, see \eqref{eq: Dimensionless improvement: Upper bound for c(p): Vol-Drag} and \eqref{eq: Dimensionless improvement: Upper bound for c(p)}. Hence, Proposition~\ref{Dimensionless improvements for the critical numbers: Proposition: Memoirs extrapolation} yields
\begin{align*}
	q_+(L) \geq \sup \Big\{ p >2 : \Phi(p) < \nicefrac{1}{\sqrt{\d(A)}} \Big\} = \Phi^{-1} \left(\nicefrac{1}{\sqrt{\d(A)}} \right).
\end{align*}
Inverting $\Phi$ leads to
\begin{equation*}
	q_+(L) \geq
	\begin{cases}
		\frac{2}{1 + \frac{\sigma -1}{\sigma^2} \ln(\d(A))} &\quad \text{if} \quad \frac{1}{4 (\sigma -1)^2} \leq \d(A), \\
		\frac{1}{2 \sqrt{\d(A)}} +1  &\quad \text{if} \quad \d(A) \leq \frac{1}{4 (\sigma -1)^2},
	\end{cases}
\end{equation*}
which proves the theorem.
\end{proof}



\begin{thebibliography}{10}
\providecommand{\url}[1]{{\tt #1}}
\providecommand{\urlprefix}{URL}
\providecommand{\eprint}[2][]{\url{#2}}

\bibitem{VVLT}
\textsc{W.~Arendt}, \textsc{C.~J.~K.~Batty}, \textsc{M.~Hieber} and \textsc{F.~Neubrander}.
\newblock Vector-Valued Laplace transforms and Cauchy problems, Monographs in Mathematics, vol.~94,
\newblock Birkh\"auser, Basel, 2011.

\bibitem{Auscher_Riesz_Trafo}
\textsc{P.~Auscher}.
{\em On necessary and sufficient conditions for {$L^p$}-estimates of {R}iesz transforms associated to elliptic operators on {$\mathbb{R}^n$} and related estimates}. Mem. Amer. Math. Soc. \textbf{186} (2007), no.~871.

\bibitem{Auscher_Heat-Kernel}
\textsc{P.~Auscher}.
{\em Regularity theorems and heat kernel for elliptic operators}. J. Lond. Math. Soc., II. Ser. \textbf{54} (1996), no.~2, 284--296.

\bibitem{Auscher-Egert}
\textsc{P.~Auscher} and \textsc{M.~Egert}.
{\em Boundary value problems and Hardy spaces for elliptic systems with block structure}. Preprint (2021),
\url{https://arxiv.org/abs/2012.02448}.

\bibitem{Auscher_Tchamitchian_Kato}
\textsc{P.~Auscher} and \textsc{P.~Tchamitchian}.
\newblock Square root problem for divergence operators and related topics, Ast{\'e}risque, vol.~249,
\newblock Soci{\'e}t{\'e} Math{\'e}matique de France, Paris, 1998.

\bibitem{Auscher_Tchamitchian_Domains}
\textsc{P.~Auscher} and \textsc{P.~Tchamitchian}.
\newblock Gaussian estimates for second order elliptic divergence operators on Lipschitz and $C^1$ domains, Evolution equations and their applications in physical and life sciences (Bad Herrenalb, 1998), Lecture Notes in Pure and Appl.~Math., vol.~215, 15--32,
\newblock Dekker, New York, 2001.

\bibitem{Ahlfors-Beurling-Transform}
\textsc{R.~Bañuelos} and \textsc{P.~Janakiraman}.
{\em $L^p$-bounds for the Beurling-Ahlfors transform}. Trans. Amer. Math. Soc. \textbf{360} (2008), 3603--3612.

\bibitem{Dong-Kim}
\textsc{H.~Dong} and \textsc{D.~Kim}.
{\em Parabolic and elliptic systems with $VMO$ coefficients}. Methods Appl. Anal. \textbf{16} (2009), no.~3, 365--388.

\bibitem{Riesz-Trafo_Linear-bound}
\textsc{O.~Dragi\v{c}evi\'{c}} and \textsc{A.~Volberg}.
{\em Bellman functions and dimensionless estimates of {Littlewood}-{Paley} type}. J. Oper. Theory \textbf{56} (2006), no.~1, 167--198.

\bibitem{Evans_PDE}
\textsc{L.~C.~Evans}.
\newblock Partial Differential Equations, Graduate Studies in Mathematics, ed.~2nd, vol.~19,
\newblock RI:~American Mathematical Society, Providence, 2010.

\bibitem{DeGiorgi_counterexample}
\textsc{P.~P.~Guidugli}.
{\em De Giorgi's counterexample in elasticity}. Q. Appl. Math. \textbf{34} (1977), no.~1, 411--419.

\bibitem{H_M_McI}
\textsc{S.~Hofmann}, \textsc{S.~Mayboroda} and \textsc{A.~McIntosh}.
{\em Second order elliptic operators with complex bounded measurable coefficients in $L^p$, Sobolev and Hardy spaces}. Ann.~Sci.~\'Ec.~Norm.~Sup\'er.~(4) \textbf{44} (2011), no.~5, 723--800.

\bibitem{Kato}
\textsc{T.~Kato}.
\newblock Perturbation theory for linear operators, Classics in Mathematics,
\newblock Springer-Verlag, Berlin, 1995.

\bibitem{Koshelev}
\textsc{A.~Koshelev}.
\newblock Regularity problem for quasilinear elliptic and parabolic systems, Lecture Notes in Mathematics, vol.~1614,
\newblock Springer-Verlag, Berlin, 1995.

\bibitem{Leonardi}
\textsc{S.~Leonardi}.
\newblock Remarks on the regularity of solutions of elliptic systems, Applied nonlinear analysis. In honor of the 70th birthday of Professor Jind\v{r}ich Ne\v{c}as, 325--344,
\newblock Kluwer Academic/Plenum Publishers, New York, 1999.

\bibitem{Necas-Giaquinta}
\textsc{M.~Giaquinta} and \textsc{J.~Nečas}.
{\em On the regularity of weak solutions to non linear elliptic systems of partial differential equations}. Journal für die reine und angewandte Mathematik \textbf{316} (1980), 140--159.

\bibitem{Stein_Interpolation}
\textsc{E.~M.~Stein}.
{\em Interpolation of linear operators}. Trans.~Amer.~Math.~Soc. \textbf{83} (1956), 482--492.

\bibitem{Stein_d_independence_of_RT_norm}
\textsc{E.~M.~Stein}.
{\em Some results in harmonic analysis in $\mathbb{R}^n$, for $n \to \infty$}. Bull.~Amer.~Math.~Soc., New~Ser. \textbf{9} (1983), 71--73.

\bibitem{Tolksdorf}
\textsc{P.~Tolksdorf}. {\em $\mathcal{R}$-sectoriality of higher-order elliptic systems on general bounded domains}.
J.~Evol.~Eq. \textbf(2018), no.~2, 323--349.

\bibitem{Hol_dependence}
\textsc{H.~Vogt} and \textsc{J.~Voigt}.
{\em Holomorphic families of forms, operators and $C_0$-semigroups}. Monatsh. Math. \textbf{187} (2018), no.~2, 375--380.


\end{thebibliography}
\end{document}